\newtheorem{theorem}{Theorem}[section]
\newtheorem{lemma}[theorem]{Lemma}
\newtheorem{corollary}[theorem]{Corollary}
\newtheorem{proposition}[theorem]{Proposition}
\theoremstyle{definition}
\newtheorem{remark}[theorem]{Remark}
\newtheorem{problem}[theorem]{Problem}
\begin{document}
\title[Products of traceless and semi-traceless matrices ...]{Products of traceless and semi-traceless \\ matrices over division rings \\ and their applications}
	
\author[P. V. Danchev]{Peter V. Danchev $^{1,*}$}
\author[T. H. Dung]{Truong Huu Dung $^{2}$}
\author[T. N. Son]{Tran Nam Son $^{3,4}$}

\address{[1] Institute of Mathematics and Informatics, Bulgarian Academy of Sciences, 1113 Sofia, Bulgaria}
\address{[2] Department of Mathematics, Dong Nai University, 9 Le Quy Don Str., Tan Hiep Ward, Bien Hoa City, Dong Nai Province, Vietnam}
\address{[3] Faculty of Mathematics and Computer Science, University of Science, Ho Chi Minh City, Vietnam}
\address{[4] Vietnam National University, Ho Chi Minh City, Vietnam}

\email{\newline Peter V. Danchev: danchev@math.bas.bg or pvdanchev@yahoo.com; \newline
	Truong Huu Dung: thdung@dnpu.edu.vn or dungth0406@gmail.com; \newline
    Tran Nam Son: trannamson1999@gmail.com}
	
\keywords{Traceless matrices, Semi-traceless matrices, Fields, Division rings, Images, Non-commutative polynomials, Ver\v{s}ik-Kerov group.\\
		\protect \indent 2020 {\it Mathematics Subject Classification.} 16U60; 16S34; 16U99.\\ \protect \indent * Corresponding author: Peter V. Danchev}
	
\begin{abstract} We study the problem when every matrix over a division ring is representable as either the product of traceless matrices or the product of semi-traceless matrices, and also give some applications of such decompositions. Specifically, we establish the curious facts that every matrix over a division ring is a product of at most twelve traceless matrices as well as a product of at most four semi-traceless matrices. We also examine finitary matrices and certain images of non-commutative polynomials by applying the obtained so far results showing that the elements of some finite-dimensional algebras over a special field as well as that these of the matrix algebra over any division ring possess some rather interesting and non-trivial decompositions into products of at most four generalized commutators.
\end{abstract}

\maketitle
	
\section{Introduction and Fundamentals}
	
Throughout this paper, let $D$ be a division ring and $n>1$ a natural number. Standardly, we use the notations of $\mathrm{M}_n(D)$ and $\mathrm{GL}_n(D)$ as the ring of matrices of size $n$ over $D$ and the general linear group of matrices of size $n$ over $D$, respectively. As usual, a matrix is called \textit{traceless} (resp., \textit{semi-traceless}) if it is (resp., similar to) a matrix whose trace is zero. According to \cite[Proposition 9]{Pa_Me_13}, if $A\in\mathrm{M}_n(D)$ is non-central and traceless, then $A$ is similar to a matrix in $\mathrm{M}_n(D)$ with only $0$ on the main diagonal. However, the converse is definitely {\it not} true. For example, let $\mathbb{H}$ be the real quaternion division ring with $i,j,k$ satisfying $i^2=j^2=k^2=-1$ and $ij=-ji=k$. Then, one sees that
$$\begin{pmatrix}
		j&0\\i&1
	\end{pmatrix}^{-1}\begin{pmatrix}
		i&j\\-j&i
	\end{pmatrix}\begin{pmatrix}
		j&0\\i&1
	\end{pmatrix}=\begin{pmatrix}
		0&1\\0&0
	\end{pmatrix},$$ but the matrix $\begin{pmatrix}
		i&j\\-j&i
	\end{pmatrix}$ is not traceless. Note that similar matrices do not have same trace in $\mathrm{M}_n(D)$. For instance, if $a$ and $b$ are in $D$ that do not commute, then the matrix $\left(\begin{matrix}
		a&0\\0&-a
	\end{matrix}\right)$ is traceless, but however the product
$$\left(\begin{matrix}
	1&0\\0&b
	\end{matrix}\right)\left(\begin{matrix}
	a&0\\0&-a
	\end{matrix}\right)\left(\begin{matrix}
	1&0\\0&b
	\end{matrix}\right)^{-1}$$ is not traceless. Therefore, we shall proceed with semi-traceless matrices in $\mathrm{M}_n(D)$. So, we denote by $\mathrm{sl}_n(D)$ the set of all traceless matrices in $\mathrm{M}_n(D)$.

\medskip

The objective which motivates writing of this article is to initiate an in-depth study of the decomposing properties of matrices over division rings into products of traceless and semi-traceless matrices. We also focussing on the exploration of decomposable properties of finitary matrices over division rings and connect them with traceless and semi-traceless matrices. We apply what we achieved to the images of non-commutative polynomials. 
	
Concretely, our work is organized thus: In the next second section, we investigate the decomposition of matrices from the general linear group over an arbitrary division ring into products of traceless matrices of the matrix ring (see Theorem~\ref{trace} and Proposition~\ref{twoprod}). In the subsequent third section, we examine the decomposition of matrices over a non-commutative division ring into products of semi-traceless matrices (see Theorem~\ref{semi-trace} and Proposition~\ref{semi-2}). In the fourth section, we focus on the so-called {\it finitary} matrices over division rings by proving some deep results pertaining to their decomposition into semi-traceless matrices (see Theorems~\ref{new1}, \ref{image1} and \ref{image2}). In the fifth section, we concentrate on the images of non-commutative polynomials by applying the results from the foregoing sections in order to establish certain decompositions of elements of some algebras (see Theorem~\ref{main3}, Proposition~\ref{algebraically} and Corollary~\ref{algebraic}). We finish our study with an important problem which seems to be extremely difficult (see Problem~\ref{unsolved}).
	
\section{Products of traceless matrices}

We begin here with the following statement.
	
\begin{theorem}\label{field}
If $D$ is a field, then  then each matrix in $\mathrm{M}_n(D)$ is a product of two traceless matrices in $\mathrm{M}_n(D)$.
\end{theorem}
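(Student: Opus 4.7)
The plan is to split into cases, handling scalar $A$ by an explicit construction and non-scalar $A$ via a prescribed-spectrum (or prescribed-characteristic-polynomial) factorization theorem. The hypothesis that $D$ is a field (not merely a division ring) plays a crucial role, since only then does conjugation preserve trace, so we may freely work up to similarity.

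\textbf{Scalar case.} Suppose $A = \lambda I_n$. If $\lambda = 0$, take $B = C = 0$. Otherwise, let $P = E_{1,n} + \sum_{i=2}^n E_{i,i-1}$ be the cyclic permutation matrix on $\{1,\dots,n\}$. Since $n \ge 2$, both $P$ and its inverse $P^{-1} = E_{n,1} + \sum_{i=1}^{n-1} E_{i,i+1}$ have zero diagonal and hence zero trace. Then $A = P \cdot (\lambda P^{-1})$ exhibits $A$ as a product of two traceless matrices in $\mathrm{M}_n(D)$.

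\textbf{Non-scalar case.} I would invoke Sourour's factorization theorem in its prescribed-characteristic-polynomial form: for any non-scalar $A \in \mathrm{M}_n(D)$ and any pair of monic polynomials $f, g \in D[x]$ of degree $n$ with $f(0)g(0) = (-1)^n \det(A)$, there exist $B, C \in \mathrm{M}_n(D)$ whose characteristic polynomials are $f$ and $g$ respectively and which satisfy $A = BC$. Since the coefficient of $x^{n-1}$ in a characteristic polynomial is the negative of the trace, choosing $f$ and $g$ with vanishing $x^{n-1}$-coefficient forces $\mathrm{tr}(B) = \mathrm{tr}(C) = 0$. The constant-term constraint $f(0)g(0) = (-1)^n \det(A)$ is trivially satisfiable for $n \ge 2$: for example take $f = x^n + \alpha_0$ and $g = x^n + \beta_0$ with $\alpha_0 \beta_0 = (-1)^n \det(A)$ (allowing one of $\alpha_0, \beta_0$ to vanish when $A$ is singular).

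\textbf{Main obstacle.} The principal difficulty is establishing or correctly citing the prescribed-characteristic-polynomial theorem in the full generality required, in particular for singular $A$, where one of the constant terms must vanish and the factor becomes nilpotent. If direct citation is inconvenient, a concrete fallback is to reduce $A$ via similarity to its rational canonical form and factor each companion block by hand: write the companion matrix of $p(x) = x^n + c_{n-1}x^{n-1} + \cdots + c_0$ as a suitably scaled cyclic shift times its complementary factor, with the scaling parameter tuned so that both factors are traceless. Small-characteristic cases where the natural scaling becomes degenerate require a minor alternate choice of entries but pose no fundamental obstruction.
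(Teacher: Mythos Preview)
Your approach is correct but entirely different from the paper's. The paper gives a one-line proof: $\mathrm{sl}_n(D)$ is a linear hyperplane of $\mathrm{M}_n(D)$, and de Seguins Pazzis's results \cite{Pa_Pa_11} on products of hyperplanes immediately yield $\mathrm{sl}_n(D)\cdot\mathrm{sl}_n(D)=\mathrm{M}_n(D)$. Your route---scalar case by hand, non-scalar case via Sourour's prescribed-characteristic-polynomial factorization---is more constructive and arguably more transparent, since it tells you \emph{which} traceless factors to take, whereas the hyperplane result is existential. The cost is exactly the obstacle you flag: Sourour's theorem in the form you need (arbitrary field, singular $A$ allowed) requires a careful citation, though it is indeed available in the literature.

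One caution on your fallback: reducing to rational canonical form and factoring companion blocks one at a time does not quite work as stated, because $1\times 1$ companion blocks $[a]$ with $a\neq 0$ cannot be written as a product of two traceless $1\times 1$ matrices. You would need to pair up such blocks (as in the paper's Lemma~\ref{23}) or merge a stray $1\times 1$ block with a larger one, which is doable but not the ``minor alternate choice of entries'' you suggest. The Sourour route avoids this bookkeeping entirely.
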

	
\begin{proof}
Assume that $D$ is a field. Let $A\in\mathrm{M}_n(D)$. Then, $\mathrm{sl}_n(D)$ is a linear hyperplane of $\mathrm{M}_n(D)$. Owing to \cite[Theorem 3 and Proposition 12]{Pa_Pa_11}, we conclude that $A$ is a product of two matrices in $\mathrm{sl}_n(D)$, as expected.
\end{proof}

However, for matrices over division rings the situation is rather more complicated. So, for our successful presentation, our next pivotal assertion is the following.

\begin{theorem}[Bruhat decomposition]\label{VUV}
Let $D$ be a division ring and $n\geq 2$ an integer. If $A\in\mathrm{M}_n(D)$, then $A$ can be expressed as $LPHU$ in which $L$ is a lower triangular matrices in $\mathrm{GL}_n(D)$ with the entries on the main diagonal are $1$, $P$ is a permutation matrix in $\mathrm{M}_n(D)$, $H$ is a diagonal matrix in $\mathrm{M}_n(D)$ and $U$ is an upper triangular matrix in $\mathrm{GL}_n(D)$ with the entries on the main diagonal are $1$.
\end{theorem}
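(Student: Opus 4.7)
The plan is to prove the statement by a division-ring version of Gaussian elimination. The key observation is that left multiplication by a lower unitriangular matrix $L$ realizes precisely the row operations ``add a left multiple of row $i$ to row $i'$'' with $i<i'$, while right multiplication by an upper unitriangular $U$ realizes the column operations ``add a right multiple of column $j$ to column $j'$'' with $j<j'$. Using only such operations I will reduce $A\in\mathrm{M}_n(D)$ to a ``monomial'' matrix, namely one with at most one nonzero entry per row and per column, and any such matrix is visibly of the form $PH$ with $P$ a permutation matrix and $H$ diagonal.

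I would argue by induction on $n$, the case $n=1$ being trivial. If $A=0$, take $L=U=P=I$ and $H=0$. Otherwise let $j_1$ be the leftmost nonzero column of $A$, let $i_1$ be the smallest index for which $a:=A_{i_1,j_1}\neq 0$, and use $a$ as a pivot. Left-multiplying $A$ by an appropriate lower unitriangular $L_1$ clears every $A_{i,j_1}$ with $i>i_1$, after which right-multiplying by an appropriate upper unitriangular $U_1$ clears every entry of row $i_1$ strictly to the right of column $j_1$. The resulting $B:=L_1AU_1$ then has its $i_1$-th row and $j_1$-th column zero outside the single entry $a$ at position $(i_1,j_1)$. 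Non-commutativity of $D$ requires only that each scalar be placed on the correct side of the row or column it multiplies; otherwise the elimination is routine.

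Now delete the $i_1$-th row and the $j_1$-th column of $B$ to obtain $A'\in\mathrm{M}_{n-1}(D)$, and apply the inductive hypothesis to write $A'=L'P'H'U'$. Extend $L',U',P',H'$ to $n\times n$ matrices $\widetilde L,\widetilde U,\widetilde P,\widetilde H$ by inserting a trivial $i_1$-th row and column (respectively $j_1$-th row and column) equal to the corresponding row and column of the identity, while additionally placing a $1$ at position $(i_1,j_1)$ in $\widetilde P$ and the scalar $a$ at position $(j_1,j_1)$ in $\widetilde H$. Because deletion of the $i_1$-th row preserves the relative order of the surviving indices, $\widetilde L$ remains lower unitriangular and $\widetilde U$ remains upper unitriangular; a direct block computation then yields $B=\widetilde L\,\widetilde P\widetilde H\,\widetilde U$, whence
$$A=\bigl(L_1^{-1}\widetilde L\bigr)\widetilde P\widetilde H\bigl(\widetilde U U_1^{-1}\bigr)$$
is a decomposition of the required form, since each class of unitriangular matrices is closed under products and inverses.

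The principal obstacle is precisely this embedding step: one must confirm that completing an $(n-1)\times(n-1)$ unitriangular matrix by a trivial row and column at the pivot location yields a genuinely unitriangular matrix in $\mathrm{M}_n(D)$. The point is that the pivot row and column are being removed, so that the strict-inequality constraints on the positions of nonzero off-diagonal entries transfer cleanly between the smaller and the larger indexings. Once this is verified, nothing else is disturbed by the non-commutativity of $D$, because all scalars have been placed consistently on the correct side throughout the elimination.
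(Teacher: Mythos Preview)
Your argument is correct: the pivot selection (topmost nonzero entry in the leftmost nonzero column) guarantees that the clearing operations below the pivot are genuine lower-unitriangular left multiplications and those to its right are genuine upper-unitriangular right multiplications, and the order-preserving nature of the deletion bijections $\{1,\dots,n-1\}\to\{1,\dots,n\}\setminus\{i_1\}$ and $\{1,\dots,n-1\}\to\{1,\dots,n\}\setminus\{j_1\}$ is exactly what makes the embedded $\widetilde L$ and $\widetilde U$ remain unitriangular. The only place your wording is slightly loose is the description of the four extensions: $\widetilde L$ needs a trivial $i_1$-th row \emph{and} column, $\widetilde U$ a trivial $j_1$-th row and column, while $\widetilde P$ needs a trivial $i_1$-th row and $j_1$-th column joined by the single $1$ at $(i_1,j_1)$; but the intended construction is clear and the block verification of $B=\widetilde L\,\widetilde P\widetilde H\,\widetilde U$ goes through because $\widetilde L$ does not mix row $i_1$ with any other row and $\widetilde U$ does not mix column $j_1$ with any other column.

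As for comparison with the paper: the paper does not supply a proof at all and simply cites Cohn's \emph{Further Algebra and Applications} (Theorem~9.2.2). Your argument is the standard Gaussian-elimination proof one finds in such references, so in spirit it is the same route; the difference is only that you have written it out in a self-contained way rather than deferring to the literature. The benefit of your version is that it makes explicit why non-commutativity causes no trouble (scalars are always applied on the appropriate side), which is useful context in a paper working over division rings.
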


\begin{proof}
This can be found in \cite[Theorem 9.2.2, Page 349]{Bo_Co_02}.
\end{proof}

\medskip

Further, to prove our main result in this section, we need a series technical claims as follows:

\begin{lemma}\label{23}
Let $D$ be a division ring. Then, the following statements are true.
\begin{enumerate}[\rm (i)]
\item If $a_1,a_2\in D$, then $\begin{pmatrix}
			a_1&0\\0&a_2
		\end{pmatrix}$ is a product of two traceless matrices in $\mathrm{M}_n(D)$.
		
\item If $a_1,a_2,a_3\in D\setminus\{0\}$, then $\begin{pmatrix}
			a_1&0&0\\0&a_2&0\\0&0&a_3
		\end{pmatrix}$ is a product of three traceless matrices in $\mathrm{M}_n(D)$.
\end{enumerate}
\end{lemma}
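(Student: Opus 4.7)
The plan is to give explicit factorizations using monomial matrices (one nonzero entry per row and column) whose support is a fixed-point-free permutation; such matrices are automatically traceless because every diagonal entry vanishes.

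For (i), I would set $X = \begin{pmatrix} 0 & 1 \\ 1 & 0 \end{pmatrix}$, the transposition matrix, which is visibly traceless and involutive ($X^2 = I$). Define $Y := X \cdot \begin{pmatrix} a_1 & 0 \\ 0 & a_2 \end{pmatrix} = \begin{pmatrix} 0 & a_2 \\ a_1 & 0 \end{pmatrix}$; this is anti-diagonal and therefore traceless as well. Then $X \cdot Y = X^2 \cdot \begin{pmatrix} a_1 & 0 \\ 0 & a_2 \end{pmatrix} = \begin{pmatrix} a_1 & 0 \\ 0 & a_2 \end{pmatrix}$, giving the required decomposition into two traceless factors.

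For (ii), I would use the natural three-dimensional analogue: the $3$-cycle $\tau := \begin{pmatrix} 0 & 1 & 0 \\ 0 & 0 & 1 \\ 1 & 0 & 0 \end{pmatrix}$, which is traceless and satisfies $\tau^{3}=I$. Take the first two factors of the decomposition to be $\tau$, and the third to be $T := \tau \cdot \begin{pmatrix} a_1 & 0 & 0 \\ 0 & a_2 & 0 \\ 0 & 0 & a_3 \end{pmatrix} = \begin{pmatrix} 0 & a_2 & 0 \\ 0 & 0 & a_3 \\ a_1 & 0 & 0 \end{pmatrix}$. Since $T$ is still supported on the same $3$-cycle (only the nonzero entries have changed from $1$ to $a_i$), its diagonal is again zero and $T$ is traceless. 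Telescoping the product via $\tau^{3}=I$ yields $\tau \cdot \tau \cdot T = \tau^{3} \cdot \mathrm{diag}(a_1,a_2,a_3) = \mathrm{diag}(a_1,a_2,a_3)$, as desired.

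There is no substantive obstacle here; both parts reduce to short, explicit verifications built around the observation that powers of a traceless cyclic permutation matrix can be used to ``rotate'' a diagonal into a shape whose trace is still zero. One remark worth flagging: the nonvanishing hypothesis $a_i\neq 0$ in (ii) is not actually needed for the factorization above, so it is presumably imposed because the lemma will be invoked later in contexts where the diagonal factor must lie in $\mathrm{GL}_n(D)$ (for instance, in handling the diagonal part $H$ of the Bruhat decomposition from Theorem~\ref{VUV}).
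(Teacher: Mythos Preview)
Your proof is correct, and for part~(i) it is verbatim the paper's: the same antidiagonal factorization
\[
\begin{pmatrix}a_1&0\\0&a_2\end{pmatrix}=\begin{pmatrix}0&1\\1&0\end{pmatrix}\begin{pmatrix}0&a_2\\a_1&0\end{pmatrix}.
\]

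For part~(ii) your argument is correct but differs from the paper's in a way worth noting. The paper in fact proves more than the stated ``three'': it exhibits the $3\times 3$ diagonal as a product of only \emph{two} traceless matrices,
\[
\begin{pmatrix}a_1&0&0\\0&a_2&0\\0&0&a_3\end{pmatrix}
=\begin{pmatrix}0&a_3&0\\-a_1&a_3&0\\0&0&-a_3\end{pmatrix}
\begin{pmatrix}1&-a_1^{-1}a_2&0\\a_3^{-1}a_1&0&0\\0&0&-1\end{pmatrix},
\]
and this is precisely why the hypothesis $a_i\neq 0$ appears: the factors involve $a_1^{-1}$ and $a_3^{-1}$. (So your guess about the hypothesis being there only for later invertibility considerations is off; it is genuinely used in the paper's own factorization.) Your three-cycle approach $\tau\cdot\tau\cdot T$ is cleaner and, as you observe, works without any nonvanishing assumption, but it costs an extra factor. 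That extra factor matters downstream: Lemma~\ref{diagonal} asserts that \emph{every} diagonal $n\times n$ matrix is a product of \emph{two} traceless matrices, proved by block-diagonal induction on Lemma~\ref{23}. For odd $n$ one needs a $3\times 3$ block expressed as a product of two traceless matrices, which your argument does not supply; with your version the induction would only yield three factors in the odd case, and the constants in Lemma~\ref{UT} and Theorem~\ref{trace} would shift accordingly.
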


\begin{proof}
(i) If $a_1,a_2\in D$, then an easy check shows that $$\begin{pmatrix}
		a_1&0\\0&a_2
	\end{pmatrix}=\begin{pmatrix}
	0&1\\1&0
	\end{pmatrix}\begin{pmatrix}
	0&a_2\\a_1&0
	\end{pmatrix},$$

\medskip

\noindent as required.

If $a_1,a_2,a_3\in D\setminus\{0\}$, then a routine inspection shows that $$\begin{pmatrix}
	a_1&0&0\\0&a_2&0\\0&0&a_3
	\end{pmatrix}=\begin{pmatrix}
	0&a_3&0\\-a_1&a_3&0\\0&0&-a_3
	\end{pmatrix}\begin{pmatrix}
	1&-a_1^{-1}a_2&0\\a_3^{-1}a_1&0&0\\0&0&-1
	\end{pmatrix},$$

\medskip

\noindent as required.
\end{proof}

\begin{lemma}\label{diagonal}
Let $D$ be a division ring and $n\geq 2$ an integer. If the elements $a_1,a_2,\ldots,a_n\in D$, then the diagonal matrix with entries $a_1,a_2,\ldots,a_n$ is a product of two traceless matrices in $\mathrm{M}_n(D)$.
\end{lemma}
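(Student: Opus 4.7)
The plan is to generalize the trick already used in Lemma~\ref{23}(i), where the key is that the swap matrix $\begin{pmatrix} 0 & 1 \\ 1 & 0 \end{pmatrix}$ is itself traceless and its product with a suitable anti-diagonal matrix recovers the given diagonal matrix. The natural extension to $n \geq 2$ is to replace the swap by any permutation matrix $P$ that comes from a \emph{fixed-point-free} permutation (a derangement), because exactly those permutation matrices have zero diagonal and are therefore traceless. For $n \geq 2$ such a permutation exists; the simplest choice is the cyclic shift $\sigma : i \mapsto i+1 \pmod{n}$.

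Given the diagonal matrix $\Delta = \mathrm{diag}(a_1, a_2, \ldots, a_n)$, I would write
$$\Delta = P \cdot \bigl(P^{-1} \Delta\bigr),$$
and claim that both factors are traceless. The factor $P$ is traceless by the choice above. For the second factor $M := P^{-1}\Delta$, a direct computation gives
$$M_{ii} = \sum_{k} (P^{-1})_{ik}\, \Delta_{ki} = (P^{-1})_{ii}\, a_i = 0,$$
since $P^{-1}$ corresponds to $\sigma^{-1}$, which is also a derangement and hence has vanishing diagonal. Thus every diagonal entry of $M$ is zero, regardless of the values of the $a_i$'s (in particular no invertibility hypothesis is needed, unlike in Lemma~\ref{23}(ii)), so $M$ is traceless as well. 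Specializing to $n=2$ reproduces exactly the identity displayed in the proof of Lemma~\ref{23}(i).

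There is essentially no obstacle in this argument; the only point that requires a moment of care is simultaneously arranging \emph{both} factors to be traceless, and this is precisely why the permutation must be a derangement rather than an arbitrary permutation with trace zero in some weaker sense. Once the cyclic-shift permutation is fixed, everything reduces to the one-line diagonal-entry computation above.
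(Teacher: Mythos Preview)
Your argument is correct. Writing $\Delta = P\,(P^{-1}\Delta)$ with $P$ the permutation matrix of an $n$-cycle indeed gives two factors each of whose diagonal entries all vanish, so both are traceless; no hypothesis on the $a_i$ is needed.

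This differs from the paper's route. The paper proves Lemma~\ref{diagonal} by ``simple induction'' from Lemma~\ref{23}: one partitions $\{1,\dots,n\}$ into blocks of size $2$ (and one block of size $3$ when $n$ is odd), applies the explicit $2\times 2$ and $3\times 3$ factorizations from Lemma~\ref{23} on each block, and then reassembles a block-diagonal decomposition, using that a block-diagonal matrix whose blocks are traceless is itself traceless. Your approach bypasses this block bookkeeping entirely: a single derangement handles all $n\ge 2$ at once. It is more uniform and also sidesteps the invertibility hypothesis $a_i\ne 0$ that appears in Lemma~\ref{23}(ii), which the inductive argument would otherwise have to address in the odd case. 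The trade-off is only that the paper's argument reuses the explicit small-size identities already recorded in Lemma~\ref{23}, whereas yours makes that lemma unnecessary for the diagonal case (the $n=2$ instance of your construction is exactly Lemma~\ref{23}(i)).
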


\begin{proof}
This claim is directly inferred from Lemma~\ref{23} by simple induction.
\end{proof}

\begin{lemma}\label{UT}
Let $D$ be a division ring and $n\geq 2$ an integer. If $A$ is an upper (resp., lower) triangular matrix in $\mathrm{M}_n(D)$ whose entries on the main diagonal are $1$, then $A$ is a product of at most four traceless matrices in $\mathrm{M}_n(D)$.
\end{lemma}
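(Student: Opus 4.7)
My plan is to handle the upper triangular case and reduce the lower triangular case to it by transposition. For the upper triangular case, the idea is a single ``diagonal correction'' that reduces the problem to Lemma~\ref{diagonal}. Concretely, given $A \in \mathrm{M}_n(D)$ upper triangular with $1$'s on the main diagonal, I would locate an invertible diagonal matrix $H = \mathrm{diag}(d_1,\ldots,d_n)$ satisfying each $d_i \neq 0$ and $d_1+\cdots+d_n = 0$. Since $A$ has only $1$'s on its diagonal, the product $T := HA$ is upper triangular with diagonal exactly $(d_1,\ldots,d_n)$, hence traceless. The factorization $A = H^{-1}T$ combined with Lemma~\ref{diagonal} (which writes the diagonal matrix $H^{-1}$ as a product of two traceless matrices) then exhibits $A$ as a product of three traceless matrices, well within the asserted bound of four.

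The only real obstacle is producing such an $H$, i.e., writing $0$ as a sum of $n$ nonzero elements of $D$. When $D$ is commutative I sidestep the issue by invoking Theorem~\ref{field}, which already gives a two-factor decomposition directly. When $D$ is non-commutative, Wedderburn's little theorem forces $D$ to be infinite, and suitable tuples are easy to exhibit: for $\mathrm{char}(D) \neq 2$ take an alternating sign pattern $(1,-1,1,-1,\ldots)$ and, if $n$ is odd, patch parity by replacing a block with a triple such as $(1,1,-2)$; for $\mathrm{char}(D) = 2$ pick distinct nonzero $a,b \in D$ and use the triple $(a,b,a+b)$ (note $a+b \neq 0$) as a building block, padded out with pairs $(c,c)$ as needed.

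For the lower triangular case, I would simply transpose. The transpose map on $\mathrm{M}_n(D)$ is an additive anti-automorphism that preserves tracelessness (since the diagonal entries are unchanged) and sends lower triangular unipotent matrices to upper triangular unipotent ones. Hence if $A$ is lower triangular with $1$'s on the main diagonal and $A^T = T_1T_2T_3$ is a decomposition obtained from the previous paragraph, then $A = T_3^T T_2^T T_1^T$ is again a product of three traceless matrices, completing the argument.
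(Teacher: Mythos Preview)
Your diagonal-correction idea for the upper triangular case is correct and clean: choosing an invertible diagonal $H$ with $\mathrm{tr}(H)=0$ gives $A=H^{-1}(HA)$, with $HA$ traceless and $H^{-1}$ a product of two traceless matrices by Lemma~\ref{diagonal}, so $A$ is a product of three traceless matrices. Your case analysis for the existence of $H$ (fields via Theorem~\ref{field}; $\mathrm{char}\neq 2$ via sign patterns; $\mathrm{char}=2$ via pairs $(c,c)$ and, for odd $n$, a triple $(a,b,a+b)$ with $a\neq b$ nonzero, available since a non-commutative $D$ is infinite by Wedderburn's little theorem) is fine.

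The genuine gap is the transpose reduction. Over a non-commutative $D$, the transpose on $\mathrm{M}_n(D)$ is \emph{not} an anti-automorphism: one has $(AB)^T_{ij}=\sum_k a_{jk}b_{ki}$ while $(B^TA^T)_{ij}=\sum_k b_{ki}a_{jk}$, and these differ when entries do not commute. Hence from $A^T=T_1T_2T_3$ you cannot conclude $A=T_3^TT_2^TT_1^T$. The fix is immediate, though: your diagonal-correction argument never used ``upper'' versus ``lower''. If $A$ is lower triangular with $1$'s on the diagonal, then $HA$ is again lower triangular with diagonal $(d_1,\dots,d_n)$, hence traceless, and $A=H^{-1}(HA)$ works verbatim. (Alternatively, one can view the transpose as an anti-isomorphism $\mathrm{M}_n(D)\to\mathrm{M}_n(D^{\mathrm{op}})$ and apply the upper triangular case over the division ring $D^{\mathrm{op}}$.)

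Compared with the paper: the paper's proof writes $A=BC$ with $B$ already a product of two traceless matrices and $C$ a diagonal matrix, then invokes Lemma~\ref{diagonal} on $C$ to get four traceless factors in total. Your route is different and, once the lower triangular case is handled directly as above, actually yields three factors rather than four; feeding this into Theorem~\ref{trace} would improve the bound there from twelve to ten.
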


\begin{proof}
Suppose that $A$ is an upper triangular matrix in $\mathrm{M}_n(D)$ such that the entries on the main diagonal are $1$. By virtue of a plain technical manipulation, which we leave to the interested reader to be inspected, the matrix $A$ can be expressed as the product $BC$ of matrices $B$ and $C$ in which $B$ is a product of two traceless matrices in $\mathrm{M}_n(D)$ and $C$ is the diagonal matrix in $\mathrm{M}_n(D)$ with only one non-zero entry and $1$'s elsewhere on the main diagonal. So, with Lemma~\ref{diagonal} at hand, $A$ is a product of at most four traceless matrices in $\mathrm{M}_n(D)$, as expected.
\end{proof}

We are now ready to prove our main theorem of this section.

\begin{theorem}\label{trace}
Let $D$ be a division ring and $n\geq 2$ an integer. If $A\in\mathrm{M}_n(D)$, then $A$ is a product of at most twelve traceless matrices in $\mathrm{M}_n(D)$.
\end{theorem}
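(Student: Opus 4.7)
The strategy is to combine the Bruhat decomposition of Theorem~\ref{VUV} with the lemmas established above. Write $A=LPHU$ with $L$ unipotent lower triangular, $U$ unipotent upper triangular, $P$ a permutation matrix, and $H$ diagonal. By Lemma~\ref{UT}, each of $L$ and $U$ is a product of at most four traceless matrices, and by Lemma~\ref{diagonal}, $H$ is a product of two traceless matrices. This accounts for ten factors, so it remains to express the permutation matrix $P$ as a product of at most two traceless matrices, after which the total count will be $4+2+2+4=12$.

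The principal obstacle is thus handling $P$. Since the trace of a permutation matrix equals the number of fixed points of the underlying permutation $\sigma\in S_n$, $P$ is in general not traceless. The plan is a block-diagonal argument driven by the cycle decomposition of $\sigma$: conjugating by a suitable permutation matrix $Q$ preserves the trace of any matrix (such conjugation merely reindexes the diagonal entries), so we may reduce to the case where $P=\mathrm{diag}(C_{k_1},\ldots,C_{k_m},I_f)$ with each $C_{k_j}$ the cyclic permutation block of length $k_j\geq 2$ and $I_f$ the identity block on the $f$ fixed points of $\sigma$. Each cyclic block $C_k$ admits the two-traceless factorization $C_k=(C_k\Delta)\cdot\Delta^{-1}$ for a diagonal $\Delta$ with $\sum_i\delta_i^{-1}=0$ (note that $C_k\Delta$ has zero diagonal because $C_k$ does). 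Each identity block $I_f$ splits as a direct sum of $2\times 2$ identities, factored as (swap)(swap) per the $a_1=a_2=1$ case of Lemma~\ref{23}(i), together with at most one extra $3\times 3$ block $I_3=C_3\cdot C_3^{-1}$ using traceless $3$-cycles when $f$ is odd; an isolated fixed point (the case $f=1$) is absorbed into an adjacent cyclic block and handled by a similar explicit factorization. Because the trace of a block-diagonal matrix is the sum of its block traces, these blockwise factorizations patch into two globally traceless factors of $P$, and the outer conjugation by $Q$ (again trace preserving) returns the decomposition of the original permutation matrix.

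Combining these observations, $A$ is a product of at most twelve traceless matrices in $\mathrm{M}_n(D)$, as required. The main technical subtlety is the existence of the diagonal scalings $\Delta$ with $\sum_i\delta_i^{-1}=0$; this is straightforward in any non-commutative division ring (automatically infinite by Wedderburn's little theorem), while in the commutative case Theorem~\ref{field} already yields the much sharper bound of two, so the remaining cases present no further difficulty.
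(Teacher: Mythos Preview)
Your overall architecture is exactly the paper's: Bruhat decomposition $A=LPHU$, then four traceless factors each for $L$ and $U$ via Lemma~\ref{UT}, two for $H$ via Lemma~\ref{diagonal}, and two for $P$, totalling twelve.

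Where you diverge from the paper is in the treatment of the permutation matrix $P$. The paper dispatches $P$ in a single line: every entry of $P$ lies in $\{0,1\}$, so $P\in\mathrm{M}_n(F_0)$ for the prime subfield $F_0\subseteq D$, and Theorem~\ref{field} immediately writes $P$ as a product of two traceless matrices over $F_0$, hence over $D$. Your block-by-block construction via the cycle decomposition is considerably more elaborate and carries a genuine soft spot in the $f=1$ case. The diagonal-scaling trick $C_k=(C_k\Delta)\Delta^{-1}$ does \emph{not} extend to the merged block $C_k\oplus(1)$: for any invertible diagonal $D=\mathrm{diag}(\delta_1,\ldots,\delta_{k+1})$, the first factor $(C_k\oplus(1))D$ has $(k{+}1,k{+}1)$ diagonal entry $\delta_{k+1}\neq 0$ while all other diagonal entries vanish, so it can never be traceless. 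Your phrase ``handled by a similar explicit factorization'' therefore hides a real obstruction, and no variant of the scaling argument in the same block shape will work. (More generally, if one keeps the block-diagonal shape, the $1\times1$ piece contributes a nonzero entry to each factor's trace that cannot be cancelled within that block.)

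Ironically, the patch is already present in your last paragraph: you invoke Theorem~\ref{field} when $D$ itself is commutative, but the point is that $P$ lies over the prime subfield \emph{regardless} of whether $D$ is commutative. Applying Theorem~\ref{field} directly to $P$ collapses your whole treatment of the permutation matrix to the paper's one-line argument and closes the gap at the same time.
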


\begin{proof}
Given $A\in\mathrm{M}_n(D)$. Invoking Theorem~\ref{VUV}, $A$ can be expressed as $LPHU$ in which $L$ is a lower triangular matrices in $\mathrm{GL}_n(D)$ having the entries on the main diagonal exactly $1$, $P$ is a permutation matrix in $A\in\mathrm{M}_n(D)$, $H$ is a diagonal matrix and $U$ is an upper triangular matrix in $\mathrm{GL}_n(D)$ possessing the entries on the main diagonal precisely $1$. Now, Lemma~\ref{UT} allows us to derive that $L$ and $U$ are products of two four traceless matrices in $\mathrm{M}_n(D)$. Since $P$ is a matrix over a subfield of $D$, we deduce that $P$ is a product of two traceless matrices in $\mathrm{M}_n(D)$. However, Lemma~\ref{diagonal} leads us to the fact that $H$ is a product of two traceless matrices in $\mathrm{M}_n(D)$. Finally, $A$ is a product of at most twelve traceless matrices in $\mathrm{M}_n(D)$, as stated.
\end{proof}

We end this section with the following two useful statements.

\begin{proposition}
Let $D$ be a division ring. Then, every matrix in $\mathrm{M}_2(D)$ is a product of at most six traceless matrices in $\mathrm{M}_2(D)$.
\end{proposition}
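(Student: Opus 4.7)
The plan is to refine the proof of Theorem~\ref{trace} by exploiting the smallness of the $2\times 2$ case. First I would apply the Bruhat decomposition (Theorem~\ref{VUV}) to write any $A\in\mathrm{M}_2(D)$ as $A=LPHU$, where $L=\begin{pmatrix}1&0\\c&1\end{pmatrix}$, $U=\begin{pmatrix}1&b\\0&1\end{pmatrix}$, $H=\begin{pmatrix}h_1&0\\0&h_2\end{pmatrix}$ is diagonal, and the $2\times 2$ permutation matrix $P$ is either $I_2$ or the transposition $\sigma=\begin{pmatrix}0&1\\1&0\end{pmatrix}$.

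The key observation is that, for $n=2$, the bound ``at most four'' of Lemma~\ref{UT} can be sharpened to ``exactly two''. Indeed, a direct check gives
\[
L=\begin{pmatrix}1&0\\0&-1\end{pmatrix}\begin{pmatrix}1&0\\-c&-1\end{pmatrix},\qquad
U=\begin{pmatrix}1&-b\\0&-1\end{pmatrix}\begin{pmatrix}1&0\\0&-1\end{pmatrix},
\]
and all four factors are traceless. Together with Lemma~\ref{23}(i), which expresses the diagonal $H$ as a product of two traceless matrices, we thereby have a clean decomposition of each of the three non-permutation factors into two traceless matrices.

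It remains only to treat the two possibilities for $P$. If $P=I_2$, then $A=LHU$ is, by the above, a product of $2+2+2=6$ traceless matrices. If $P=\sigma$, then
\[
PH=\begin{pmatrix}0&h_2\\h_1&0\end{pmatrix}
\]
is itself a single traceless matrix, so $A=L\cdot(PH)\cdot U$ is a product of $2+1+2=5$ traceless matrices. In both cases the total is at most six, as claimed.

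There is no real obstacle in this argument; the only point demanding genuine care is the non-trivial permutation case, since the naive count $2+1+2+2=7$ would exceed the stated bound, and one must notice that $P$ can be absorbed into $H$ to produce a legitimately traceless matrix. Beyond this bookkeeping, the proof reduces to the elementary factorisations of $L$ and $U$ displayed above and a single appeal to Lemma~\ref{23}(i).
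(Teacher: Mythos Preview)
Your proof is correct but follows a different route from the paper. The paper proves this proposition by a direct case analysis on the entries of $A=\begin{pmatrix}a&b\\c&d\end{pmatrix}$: when $a\neq 0$ it exhibits an explicit factorisation into six traceless matrices, and in the degenerate cases $a=0$ it produces shorter explicit products (three or two factors). No appeal to the Bruhat decomposition or to Lemma~\ref{23} is made.

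Your approach instead specialises the machinery of Theorem~\ref{trace} to $n=2$: you invoke Theorem~\ref{VUV}, sharpen Lemma~\ref{UT} to ``two'' for the unipotent triangular factors, use Lemma~\ref{23}(i) for $H$, and then handle the permutation by either dropping $P=I_2$ or absorbing $P=\sigma$ into $H$. This is a clean structural argument and makes transparent \emph{why} the general bound of twelve collapses to six in the $2\times 2$ case, whereas the paper's proof, being purely computational, gives the sharper information that the degenerate cases actually need at most three factors. Either way the bound of six is reached; your version has the merit of fitting neatly into the paper's overall strategy, while the paper's version is entirely self-contained.
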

	
\begin{proof}
Choose $A=\begin{pmatrix}
			a&b\\c&d
		\end{pmatrix}\in\mathrm{M}_2(D)$. If $a\neq0$, then $$A=\left(\begin{smallmatrix}
			1&0\\0&-1
		\end{smallmatrix}\right)\left(\begin{smallmatrix}
			1&0\\-ca^{-1}&-1
		\end{smallmatrix}\right)\left(\begin{smallmatrix}
			0&a\\1&0
		\end{smallmatrix}\right)\left(\begin{smallmatrix}
			0&d-ca^{-1}b\\1&0
		\end{smallmatrix}\right)\left(\begin{smallmatrix}
			1&0\\0&-1
		\end{smallmatrix}\right)\left(\begin{smallmatrix}
			1&a^{-1}b\\0&-1
		\end{smallmatrix}\right).$$ If $a=0$ and $b\neq0$, then $$A=\left(\begin{matrix}
			0&b\\c&d
		\end{matrix}\right)=\left(\begin{matrix}
			1&0\\0&-1
		\end{matrix}\right)\left(\begin{matrix}
			1&0\\-db^{-1}&-1
		\end{matrix}\right)\left(\begin{matrix}
			0&b\\c&0
		\end{matrix}\right).$$ If $a=b=0$ and $c\neq0$, then $$A=\left(\begin{matrix}
			0&0\\c&d
		\end{matrix}\right)=\left(\begin{matrix}
			0&0\\c&0
		\end{matrix}\right)\left(\begin{matrix}
			1&0\\0&-1
		\end{matrix}\right)\left(\begin{matrix}
			1&c^{-1}d\\0&-1
		\end{matrix}\right).$$ If $a=b=c=0$, then $$A=\left(\begin{matrix}
			0&0\\0&d
		\end{matrix}\right)=\left(\begin{matrix}
			0&0\\1&0
		\end{matrix}\right)\left(\begin{matrix}
			0&d\\1&0
		\end{matrix}\right).$$ This substantiates our claim.
\end{proof}

\begin{proposition}\label{twoprod}
Let $R$ be a ring and $n\geq 2$ an integer. If $A\in\mathrm{M}_n(R)$, then $A$ is a sum of two products of pairs of traceless matrices in $\mathrm{M}_n(R)$.
\end{proposition}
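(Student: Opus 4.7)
The plan is to exhibit $A$ as a sum of two products of traceless matrices by splitting $A$ along its rows. Write $A = A_1 + A_2$, where $A_1 \in \mathrm{M}_n(R)$ is the matrix whose first $n-1$ rows agree with those of $A$ and whose last row is zero, and where $A_2$ is the matrix whose only non-zero row is the last row of $A$. It now suffices to express each $A_k$ as a single product $B_k C_k$ with $B_k, C_k$ traceless.

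For $A_2$, I would take $B_2 := E_{n1}$, which is traceless since $n\geq 2$. Left multiplication by $E_{n1}$ places the first row of a matrix into the last row and zeros out everything else, so I pick $C_2$ to be any traceless matrix whose first row is the last row of $A$. Concretely, setting
$$C_2 := \sum_{j=1}^n A_{nj}\,E_{1j} - A_{n1}\,E_{22},$$
one has $\mathrm{tr}(C_2) = A_{n1} - A_{n1} = 0$ and a direct check yields $E_{n1}C_2 = A_2$.

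For $A_1$, the key observation is that the shift matrix
$$B_1 := \sum_{i=1}^{n-1} E_{i,i+1}$$
is strictly upper triangular, hence traceless, and left multiplication by $B_1$ sends any matrix $C_1$ to the matrix whose $i$-th row equals the $(i+1)$-st row of $C_1$ for $i\leq n-1$ and whose last row vanishes. Thus if I define $C_1$ so that its $(i+1)$-st row equals the $i$-th row of $A$ for $i=1,\dots,n-1$, then $B_1C_1 = A_1$ automatically. The first row of $C_1$ is still free, and I use it to arrange tracelessness: the diagonal entries $(C_1)_{22},\dots,(C_1)_{nn}$ are prescribed to be $A_{12},A_{23},\dots,A_{n-1,n}$, so setting $(C_1)_{11} := -\sum_{i=1}^{n-1} A_{i,i+1}$ (and the other entries of the first row to zero) gives $\mathrm{tr}(C_1) = 0$.

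Combining the two identities produces $A = B_1C_1 + B_2C_2$ with all four factors traceless, which is exactly the desired decomposition. The argument is a direct verification rather than an obstacle-laden proof, so there is no genuine difficulty; the only conceptual ingredient is the shift-matrix trick, which lets one capture all of the first $n-1$ rows of $A$ in a single product while having enough room left to adjust the trace of the right-hand factor. Note that no division, commutativity, or ring-theoretic hypothesis on $R$ beyond having an identity is used, so the result holds over an arbitrary ring.
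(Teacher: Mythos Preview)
Your argument is correct. Both factors $B_1,B_2,C_1,C_2$ are traceless for the reasons you give, and the two matrix identities $B_1C_1=A_1$ and $E_{n1}C_2=A_2$ are immediate from the row-shift behaviour of left multiplication by $\sum_{i=1}^{n-1}E_{i,i+1}$ and $E_{n1}$.

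The route, however, differs from the paper's. The paper splits $A=B+C$ into a lower-triangular and an upper-triangular part (with a mild constraint on the diagonals so that the last column of $B$ and the first column of $C$ vanish), and then writes each triangular piece as a product in which the \emph{right} factor is the subdiagonal (resp.\ superdiagonal) shift matrix; the free first (resp.\ last) column of the left factor is used to kill the trace. You instead split $A$ by rows --- the first $n-1$ rows versus the last row --- and use \emph{left} multiplication by shift matrices, adjusting the trace via a free row. Both arguments rest on the same shift-matrix idea, but your decomposition is asymmetric and arguably tidier: the rank-one piece $A_2=E_{n1}C_2$ is essentially trivial, so all the work is concentrated in a single shift product rather than two symmetric ones. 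Neither approach needs anything beyond a ring with identity.
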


\begin{proof}
Choose $A=(a_{i,j})\in\mathrm{M}_n(R)$. So, $A$ can be expressed as the sum $A=B+C$ in which $B=(b_{i,j})\in\mathrm{M}_n(R)$ is a lower-triangular matrix and $C=(c_{i,j})\in\mathrm{M}_n(R)$ is an upper-triangular matrix satisfying the equations
		$b_{1,1}=a_{1,1}, c_{1,1}=0,b_{2,2}+c_{2,2}=a_{2,2},b_{3,3}+c_{3,3}=a_{3,3},\ldots,b_{n-1,n-1}+c_{n-1,n-1}=a_{n-1,n-1},b_{n,n}=0,c_{n,n}=a_{n,n}$. Let us now $B_1$ be the matrix $$\begin{pmatrix}
			b&b_{1,1}&0&\cdots&0\\
			0&b_{2,1}&b_{2,2}&\cdots&0\\
			0&b_{3,1}&b_{3,2}&\ddots&0\\
			\vdots&\vdots&\vdots&\ddots&b_{n-1,n-1}\\
			0&b_{n,1}&b_{n,2}&\cdots&b_{n,n-1}
		\end{pmatrix}$$ in which the element $b$ is chosen so that this matrix will have trace $0$, and let $B_2$ be the matrix $$\begin{pmatrix}
		0& & & & \\
		1&0& & & \\
		 &1&\ddots& &\\
		 & &\ddots&0&0\\
		 & & &1&0
		\end{pmatrix}.$$ Also, let $C_1$ be the matrix $$\begin{pmatrix}
		c_{1,2}&c_{1,3}&\cdots&c_{1,n}&0\\
		c_{2,2}&c_{2,3}&\cdots&c_{2,n}&0\\
		0&c_{3,3}&\cdots&c_{3,n}&0\\
		\vdots&\vdots&\ddots&\vdots&\vdots\\
		0&0&\cdots&c_{n,n}&c
		\end{pmatrix},$$ where the element $c$ is chosen so that this matrix will be traceless, and let $C_2$ be the transpose matrix of $B_2$. Therefore, $A=B_1B_2+C_1C_2$ is a sum of two products of pairs of traceless matrices in $\mathrm{M}_n(R)$, as required.
\end{proof}

\section{Products of semi-traceless matrices}
	
Everywhere in this section, let $D$ be a division ring and $n>1$ a natural number and we use the notations: $\mathrm{M}_n(D)$ and $\mathrm{GL}_n(D)$ are respectively the ring of matrices of degree $n$ over $D$ and the general linear group of matrices of degree $n$ over $D$. Assume that $f(x)=x^n+a_{n-1}x^{n-1}+\cdots+a_0\in D[x]$ is a monic polynomial in a variable  $x$ with coefficients $a_i$ in $D$. \textit{A companion matrix} $C(f)$ of $f(x)$ is defined as $$C(f)=\begin{pmatrix}
		0& & &-a_0 \\
		1& & &-a_1\\
		&\ddots& &\vdots\\
		0& &1&-a_{n-1}
	\end{pmatrix}\in \mathrm{M}_n(D).$$

\medskip

We start here with the following.
	
\begin{lemma}\label{2companion}
If $A\in\mathrm{GL}_n(D)$ and $B$ is a companion matrix in $\mathrm{GL}_n(D)$, then $A$ can be expressed as the product $B'C$ of matrices $B'$ and $C$ in which $B'$ is similar to $B$ and $C$ is similar to a  companion matrix in $\mathrm{GL}_n(D)$.
\end{lemma}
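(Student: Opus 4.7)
The aim is to find $P \in \mathrm{GL}_n(D)$ such that, setting $B' := PBP^{-1}$ and $C := B'^{-1}A$, we have $A = B'C$ with $B' \sim B$ and $C$ similar to a companion matrix. The driving observation is that a matrix in $\mathrm{M}_n(D)$ is similar to a companion matrix if and only if it admits a \emph{cyclic vector}, i.e., a vector $v$ with $v, Cv, \ldots, C^{n-1}v$ a basis of $D^n$. Since $C = B'^{-1}A$ is automatically invertible, the task reduces to finding $P$ for which $C$ admits a cyclic vector.

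Write $B = C(f)$ for $f(x) = x^n + a_{n-1}x^{n-1} + \cdots + a_0$, noting $a_0 \ne 0$ as $B$ is invertible. Denoting the columns of $P$ by $p_1, \ldots, p_n$, the relations
\[
B'p_i = p_{i+1}\quad (1 \le i < n), \qquad B'p_n = -\sum_{i=1}^n a_{i-1}p_i
\]
express the companion action of $B$ in the basis $(p_i)$ and automatically guarantee $B' \sim B$. Correspondingly, $B'^{-1}p_j = p_{j-1}$ for $j \ge 2$ and $B'^{-1}p_1 = -a_0^{-1}\bigl(p_n + \sum_{i=1}^{n-1} a_i p_i\bigr)$. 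Hence, once one expands $Ap_j = \sum_i \alpha_{ij}p_i$, every iterate $C^k p_1 = (B'^{-1}A)^k p_1$ is an explicit combination of $p_1, \ldots, p_n$ with coefficients that are (non-commutative) polynomial expressions in the $\alpha_{ij}$'s and the $a_i$'s.

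The plan is to exploit the freedom in the basis $(p_1,\ldots,p_n)$ to force $p_1$ itself to be a cyclic vector of $C$. Build the basis inductively: fix a nonzero $p_1$, then choose $p_2, p_3, \ldots, p_n$ one after another, each in sufficient general position so that the growing sequence $p_1, Cp_1, \ldots, C^{i-1}p_1$ remains linearly independent. At each step, the condition on the new vector translates into the non-vanishing of a determinantal expression in the coordinates already fixed, and therefore excludes only a proper subset of $D^n$ from the admissible choices.

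The main technical obstacle is to verify rigorously, step by step, that the forbidden locus for the next basis vector really is a proper subset of $D^n$. Over a field this would follow from Zariski-openness of the cyclicity condition, but over a non-commutative division ring such abstract genericity arguments must be replaced by explicit linear-algebraic analysis of the arising (non-commutative) polynomial conditions. Once this delicate but essentially combinatorial step is carried out, the constructed $P$ yields the desired factorization $A = B'C$ with $B' \sim B$ and $C$ similar to a companion matrix.
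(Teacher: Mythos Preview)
Your proposal does not actually prove the lemma; it outlines a strategy and then explicitly defers the one step that carries all the weight. Two concrete problems:

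\textbf{Circularity of the inductive construction.} You propose to pick $p_1,\ldots,p_n$ one at a time so that at stage $i$ the vectors $p_1,Cp_1,\ldots,C^{i-1}p_1$ are independent. But $C=B'^{-1}A$ is only defined once the \emph{entire} basis $(p_1,\ldots,p_n)$ has been chosen, since $B'$ is the map determined by $B'p_j=p_{j+1}$ and $B'p_n=-\sum a_{i-1}p_i$. Already $Cp_1=B'^{-1}(Ap_1)$ requires expanding $Ap_1$ in the basis $(p_1,\ldots,p_n)$, hence depends on every $p_j$. So the condition you want to impose at ``step $i$'' is not a condition on the new vector alone (nor ``on the coordinates already fixed''); it involves all remaining, still-unchosen basis vectors. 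The inductive scheme as written is therefore not well-posed.

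\textbf{The genericity step is not carried out.} Even if one reformulates the argument as a single global condition on $P\in\mathrm{GL}_n(D)$ (namely, that $p_1$ be cyclic for $B'^{-1}A$), you yourself note that over a non-commutative $D$ the usual Zariski-openness reasoning is unavailable, and that what remains is a ``delicate'' explicit analysis---which you then do not perform. Non-commutative polynomial maps can vanish identically on $D$, and there is no general ``nonzero polynomial $\Rightarrow$ non-vanishing somewhere'' principle to invoke; showing the forbidden locus is proper is the entire content of the lemma, not a routine verification.

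For comparison, the paper does not attempt a direct argument at all: it simply invokes \cite[Theorem~10]{Pa_VaWhe_70}, which establishes the factorization over any ring of stable rank $1$ (and division rings have stable rank $1$). That result is proved by systematic use of elementary row/column operations available under the stable-rank-$1$ hypothesis, an approach quite different from cyclic-vector genericity.
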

	
\begin{proof}
This is a special case of \cite[Theorem 10]{Pa_VaWhe_70}.
\end{proof}

Our basic technicality asserts thus.
	
\begin{lemma}\label{companion traceless}
Each companion matrix in $\mathrm{M}_n(D)$ is a product of two traceless matrices in $\mathrm{M}_n(D)$.
\end{lemma}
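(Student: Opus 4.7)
The plan is to exhibit $C(f) = AB$ explicitly as a product of two traceless matrices, splitting on whether the constant term $a_0$ vanishes. Observe first that if $D$ is a field, then Theorem~\ref{field} already supplies the stronger statement that every matrix in $\mathrm{M}_n(D)$ is a product of two traceless matrices; so I would immediately reduce to the case where $D$ is non-commutative (and hence infinite).

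For $a_0 = 0$, I would take $A := J = \sum_{i=1}^{n-1} E_{i+1,i}$, the strict lower shift matrix, which is traceless. Left multiplication by $J$ shifts the rows of any matrix down by one and zeroes the first row; since the first row of $C(f)$ equals $(0,\ldots,0,-a_0)=0$ under the hypothesis $a_0=0$, the equation $JM = C(f)$ is consistent and determines rows $1,\ldots,n-1$ of $M$, forcing $M_{i,i}=1$ for $i=1,\ldots,n-1$, while leaving row $n$ entirely free. Setting $M_{n,n}=1-n$ and zeroing the rest of row $n$ then gives $\mathrm{tr}(M) = (n-1)+(1-n) = 0$, yielding the desired factorization $C(f)=JM$ with both factors traceless.

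For $a_0 \neq 0$, I would use the ansatz $C(f) = Q \cdot (Q^{-1}C(f))$, where $Q$ is a generalized cyclic shift: subdiagonal entries $\mu_1,\ldots,\mu_{n-1} \in D \setminus \{0\}$, a non-zero entry $\nu$ in position $(1,n)$, and zeros elsewhere. Then $Q$ is invertible and already traceless (its diagonal is zero). A direct computation using $Qe_i = \mu_i e_{i+1}$ for $i<n$ and $Qe_n = \nu e_1$ shows that $Q^{-1}C(f)$ has diagonal $(\mu_1^{-1},\ldots,\mu_{n-1}^{-1},-a_0\nu^{-1})$, so its trace equals $S - a_0\nu^{-1}$, where $S := \sum_{i=1}^{n-1} \mu_i^{-1}$. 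The final step is to choose the $\mu_i$'s so that $S \neq 0$ and then set $\nu := S^{-1}a_0 \in D\setminus\{0\}$, which forces $\mathrm{tr}(Q^{-1}C(f)) = 0$.

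The main delicate point is verifying that non-zero $\mu_i$'s with $S \neq 0$ always exist, but this is mild: since $D$ is infinite in the non-commutative case, any choice of $\mu_2,\ldots,\mu_{n-1}$ excludes at most one value of $\mu_1$, so valid choices abound. This furnishes the factorization in all remaining cases uniformly and completes the plan.
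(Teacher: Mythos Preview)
Your argument is essentially correct, though it proceeds quite differently from the paper. The paper gives fully explicit factorizations: for $n=2$ it splits on whether $a_0=0$, and for $n>2$ it writes down a single pair of traceless matrices $B,C$ (with entries arranged so that the diagonal sums cancel to zero) whose product is the companion matrix, valid over \emph{every} division ring, including finite fields, with no appeal to Theorem~\ref{field}. Your approach instead splits on $a_0$, disposes of the field case wholesale via Theorem~\ref{field}, and for $a_0\neq 0$ over a non-commutative $D$ uses the parametrized cyclic shift $Q$ together with the infiniteness of $D$ (via Wedderburn's little theorem) to achieve $S\neq 0$. Both routes are valid; the paper's is self-contained and uniform in $D$, whereas yours is more conceptual but leans on the earlier theorem and an external infiniteness fact.

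One small slip worth fixing: in a non-commutative $D$ the $(n,n)$ entry of $Q^{-1}C(f)$ is
\[
(Q^{-1})_{n,1}\,(C(f))_{1,n}=\nu^{-1}\cdot(-a_0)=-\nu^{-1}a_0,
\]
not $-a_0\nu^{-1}$. This does not break the argument---simply take $\nu=a_0S^{-1}$ rather than $\nu=S^{-1}a_0$---but the order should be corrected before you write it up.
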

	
\begin{proof}
Let $A$ be a companion matrix in $\mathrm{M}_n(D)$, namely $$A=\begin{pmatrix}
			0& & &a_0 \\
			1& & &a_1\\
			&\ddots& &\vdots\\
			0& &1&a_{n-1}
		\end{pmatrix}.$$ First, we consider $n=2$. If $a_0=0$, then $$A=\begin{pmatrix}
			0&0\\1&a_1
		\end{pmatrix}=\begin{pmatrix}
		0&0\\1&0
		\end{pmatrix}\begin{pmatrix}
		1&a_1\\0&-1
		\end{pmatrix}.$$ If $a_0\neq0$, then $$A=\begin{pmatrix}
			0&a_0\\1&a_1
		\end{pmatrix}=\begin{pmatrix}
			a_1&-a_0\\1+a_1a_0^{-1}a_1&-a_1
		\end{pmatrix}\begin{pmatrix}
			1&0\\a_0^{-1}a_1&-1
		\end{pmatrix}$$ is a product two traceless matrices. Now, we consider $n>2$. Then, $A$ can be expressed as the product $BC$ in which  $$B=\begin{pmatrix}
			0&0&0&\cdots&0&-a_0\\
			0&1&0&\cdots&0&-a_1\\
			0&0&1&\cdots&0&-a_2\\
			\vdots&\vdots&\ddots&\ddots&\ddots&\vdots\\
			0&0&\cdots&0&1&-a_{n-2}\\
			1&-1&0&\cdots&0&-(n-2)
		\end{pmatrix}$$ and $$C=\begin{pmatrix}
			1&0&0&\cdots&1&a_{n-1}-(n-2)\\
			1&0&0&\cdots&0&0\\
			0&1&0&\cdots&0&0\\
			\vdots&\ddots&\ddots&\vdots&\vdots\\
			0&0&\cdots&1&0&0\\
			0&0&0&\cdots&0&-1
		\end{pmatrix}$$ are traceless matrices, as required.
\end{proof}

\begin{remark}
Let $R$ be a ring. We know that, if $n\geq3$, then every companion matrix in $\mathrm{M}_n(R)$ is a product of two traceless matrices in $\mathrm{M}_n(R)$. However, if $n=2$, then every companion matrix in $\mathrm{M}_2(R)$ is a product of four traceless matrices in $\mathrm{M}_2(R)$. Indeed, a simple calculation shows that $$\begin{pmatrix}0&a_0\\ \:1&a_1\end{pmatrix}=\begin{pmatrix}0&1\\ 1&0\end{pmatrix}\begin{pmatrix}0&1\\ \:a_0\:&0\end{pmatrix}\begin{pmatrix}0&-1\\ 1&0\end{pmatrix}\begin{pmatrix}1&a_1\\ 0&-1\end{pmatrix}.$$
\end{remark}

\medskip

The next claim is well-known, but is formulated here only for convenience and completeness.
	
\begin{lemma}\label{nilpotent}
Each nilpotent matrix in $\mathrm{M}_n(D)$ is a semi-traceless matrix.
\end{lemma}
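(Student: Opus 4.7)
The plan is to show that any nilpotent matrix $N\in\mathrm{M}_n(D)$ is similar to a strictly upper triangular matrix; since such a matrix has all diagonal entries equal to $0$, its trace is $0$, and hence $N$ is semi-traceless by definition.

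First I would view $N$ as an endomorphism of the right (or left) $D$-vector space $V=D^n$ and consider the descending chain
\[
V\supseteq N(V)\supseteq N^{2}(V)\supseteq\cdots\supseteq N^{k-1}(V)\supseteq N^{k}(V)=0,
\]
where $k$ is chosen so that $N^{k}=0$. Because $D$-vector spaces behave exactly as vector spaces over fields with respect to bases and dimension, each $N^{i}(V)$ has a well-defined dimension, and I can choose a basis of $V$ that refines this flag, i.e., a basis whose first $\dim N^{k-1}(V)$ vectors span $N^{k-1}(V)$, the next group extends this to a basis of $N^{k-2}(V)$, and so on.

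In such a basis, the image of any basis vector lying in $N^{i}(V)$ under $N$ lies in $N^{i+1}(V)$, so the matrix representing $N$ sends each basis vector to a $D$-linear combination of basis vectors appearing strictly later in the list. Ordering the basis appropriately, this matrix is strictly upper triangular, hence traceless. Since the matrix of $N$ in the standard basis is conjugate (by the change-of-basis matrix, which lies in $\mathrm{GL}_{n}(D)$) to this strictly upper triangular matrix, $N$ is similar to a traceless matrix and therefore semi-traceless.

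The only mild subtlety is ensuring the basis-refinement argument is valid over a non-commutative division ring; this is standard since finitely generated $D$-modules are free with invariant rank and subspaces admit complements, so no special care is needed beyond the usual linear algebra over division rings.
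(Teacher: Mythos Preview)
Your argument is correct and amounts to exactly the fact the paper invokes: the paper's proof is simply a one-line citation of \cite[Lemma~3.2]{Pa_AbLe_21}, which asserts that a nilpotent matrix over a division ring is similar to a strictly upper triangular matrix. Your flag-refinement construction is the standard self-contained proof of that lemma (the minor slip ``later'' versus ``earlier'' in your chosen ordering is harmless, since you already note the basis can be reordered appropriately).
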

	
\begin{proof}
This is implied directly from \cite[Lemma 3.2]{Pa_AbLe_21}.
\end{proof}

The next comments are worthwhile to explain somewhat the more complicated situation here.

\begin{remark}
Let $\mathbb{H}$ be the real quaternion division ring with $i,j,k$ satisfying $i^2=j^2=k^2=-1$ and $ij=-ji=k$. Thus, if $$A=\begin{pmatrix}
			i&j\\-j&i
		\end{pmatrix}\in\mathrm{M}_2(\mathbb{H}),$$ then $A^2=0$ and $A$ is a nilpotent matrix whose trace is $2i\neq 0$. Therefore, there is a nilpotent matrix over a non-commutative division ring which is not traceless.
\end{remark}

Our chief result in the present section is the following one.
	
\begin{theorem}\label{semi-trace}
If $D$ is a non-commutative division ring, then each matrix in $\mathrm{M}_n(D)$ is a product of at most four semi-traceless matrices in $\mathrm{M}_n(D)$.
\end{theorem}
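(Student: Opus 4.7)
The plan is to reduce every $A \in \mathrm{M}_n(D)$ to a product $A = B'C$ in which both factors are similar to (possibly singular) companion matrices, and then apply Lemma~\ref{companion traceless} factor by factor. Once such a factorization is in hand, writing $B' = SB_0S^{-1}$ and $C = TC_0T^{-1}$ with $B_0,C_0$ companion matrices, and using Lemma~\ref{companion traceless} to decompose $B_0 = T_1T_2$ and $C_0 = T_3T_4$ into products of traceless matrices, one gets
\[
A = (ST_1S^{-1})(ST_2S^{-1})(TT_3T^{-1})(TT_4T^{-1}),
\]
a product of four matrices each similar to a traceless matrix, hence four semi-traceless matrices.

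For $A \in \mathrm{GL}_n(D)$, the required factorization is supplied directly by Lemma~\ref{2companion}: fix any invertible companion matrix $B$ and apply the lemma to write $A = B'C$ with $B'$ similar to $B$ and $C$ similar to some invertible companion matrix. The reduction above then finishes this case.

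For $A$ singular and non-zero, the strategy is to establish the singular analogue of Lemma~\ref{2companion}, producing a factorization $A = B'C$ in which $B'$ is similar to an invertible companion matrix and $C$ is similar to a singular companion matrix (the companion of some polynomial with zero constant term, which is nilpotent-like). The trivial case $A = 0$ is handled by $0 = 0\cdot 0$. The remark following Lemma~\ref{nilpotent} is suggestive here: over a non-commutative $D$, nilpotent and hence singular matrices need not be traceless, so the similarity class of a singular companion is rich enough that such factorizations have genuine room to exist, and Lemma~\ref{nilpotent} guarantees that the resulting singular factor is still semi-traceless.

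The main obstacle is precisely this singular extension of Lemma~\ref{2companion}, which is not covered by its statement as quoted above. The non-commutativity of $D$ is essential at this step: over a field, similar matrices share the same trace, so ``semi-traceless'' collapses to ``traceless'' and the sharper statement of Theorem~\ref{field} already applies; it is the extra flexibility provided by traces behaving non-trivially under similarity in the non-commutative setting that enables the uniform bound of four semi-traceless factors, including in the singular regime.
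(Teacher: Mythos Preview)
Your treatment of the invertible case is essentially the paper's, and it is correct; the paper even sharpens it to three factors by choosing the free companion matrix $B$ in Lemma~\ref{2companion} to be traceless from the start (take $a_{n-1}=0$), so that $B'$ itself is already semi-traceless and only $C$ needs Lemma~\ref{companion traceless}.

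The genuine gap is the singular case. You correctly identify that a ``singular analogue of Lemma~\ref{2companion}'' would suffice, but you do not prove one, and your concluding paragraph only gestures at why such an extension might exist rather than supplying it. The paper sidesteps this obstacle entirely: instead of extending Lemma~\ref{2companion}, it invokes \cite[Corollary~7]{Pa_Ca_16} to factor any singular $A$ as $A=BC$ with $B\in\mathrm{GL}_n(D)$ and $C$ nilpotent. The invertible factor $B$ is then handled by the already-established invertible case (three semi-traceless factors), and $C$ is a single semi-traceless factor by Lemma~\ref{nilpotent}. This gives four factors in total with no new structural lemma needed. Note also that your remarks about non-commutativity being ``essential at this step'' are off the mark: nothing in the paper's argument for the singular case uses non-commutativity of $D$; the hypothesis in the theorem statement serves only to separate this result from the sharper field case of Theorem~\ref{field}.
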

	
\begin{proof}
Assume that $D$ is a non-commutative division ring. Let $A\in\mathrm{M}_n(D)$. If $A\in\mathrm{GL}_n(D)$, then, combining Lemma~\ref{2companion} and Lemma~\ref{companion traceless}, one verifies that $A$ is a product of three semi-traceless matrices in $\mathrm{M}_n(D)$. Now, we consider $A\notin\mathrm{GL}_n(D)$. According to \cite[Corollary 7]{Pa_Ca_16}, there exist $B\in\mathrm{GL}_n(D)$ and $C$ is nilpotent in $\mathrm{M}_n(D)$ such that $A$ can be expressed as the product of $B$ and $C$, writing $A=BC$. In conjunction with the previous argument, one checks that $B$ is a product of three semi-traceless matrices in $\mathrm{M}_n(D)$. On the other hand, Lemma~\ref{nilpotent} applies to get that $C$ is a semi-traceless matrix. Therefore, $A$ is a product of four semi-traceless matrices in $\mathrm{M}_n(D)$, as promised.
\end{proof}

A question which immediately arises is whether the estimation of the number of matrices in the decomposition is the exact one or, in other words, can we decrease the number of matrices in this decomposition? In this aspect, we can just offer the following.

\begin{proposition}\label{semi-2}
Let $D$ be division ring with center $F$ and $n\geq 2$ an integer. If $F$ is perfect, then every matrix in $\mathrm{M}_n(D)$ is a product of two semi-traceless matrices in $\mathrm{M}_n(D)$.
\end{proposition}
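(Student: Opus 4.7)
The plan is to dispose of the commutative case first: if $D$ is a field, then $D=F$, the trace is a similarity invariant in $\mathrm{M}_n(F)$, and so ``semi-traceless'' coincides with ``traceless''; the desired decomposition then already follows from Theorem~\ref{field}, and the perfectness assumption is not needed for this subcase. Accordingly, I restrict attention to a non-commutative $D$ from now on.

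For $A\in\mathrm{GL}_n(D)$ I would proceed as in the proof of Theorem~\ref{semi-trace}, but with a better choice of the auxiliary companion matrix. Specifically, I apply Lemma~\ref{2companion} with $B=C(f)$ where $f(x)=x^n+\alpha$ for some $\alpha\in D\setminus\{0\}$; then $B$ is invertible and, since $a_1=\dots=a_{n-1}=0$, its main diagonal is entirely zero so that its trace is zero. The lemma yields $A=B'C$ with $B'$ similar to the traceless matrix $B$ -- hence $B'$ is semi-traceless by definition -- and with $C$ similar to some companion matrix $C(g)\in\mathrm{M}_n(D)$. The invertible case now reduces to the following auxiliary claim: \emph{every companion matrix in $\mathrm{M}_n(D)$ is semi-traceless}.

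Proving this auxiliary claim is the main obstacle, and the place where perfectness of $F$ enters. My plan would be to conjugate an arbitrary companion matrix $C(g)$ to a matrix with trace zero by combining two types of elementary similarities. Conjugation by a diagonal matrix $\mathrm{diag}(d_1,\dots,d_n)$ replaces the lone nonzero diagonal entry $-g_{n-1}$ of $C(g)$ by its $D$-conjugate $d_n(-g_{n-1})d_n^{-1}$; conjugation by an elementary transvection $I+p\,e_{i,j}$ with $i\ne j$ shifts the trace by the additive commutator $[p,\,A_{j,i}]$, where $A$ is the current conjugate of $C(g)$. Iterating these moves, the set of attainable traces fills an additive subset of $D$ that contains all $D^\times$-conjugates of $-g_{n-1}$ together with all commutators produced by the off-diagonal entries encountered. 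Herstein/Amitsur-type theorems -- applicable precisely because $D$ is non-commutative and $F$ is perfect, which guarantees a supply of separable non-central elements -- then imply that this attainable set is all of $D$ unless $C(g)$ falls into a short list of degenerate forms that can be handled by direct inspection. In particular, $0$ is attainable, which is exactly the semi-tracelessness of $C(g)$.

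For the remaining case $A\notin\mathrm{GL}_n(D)$, I would reuse the decomposition $A=BC$ with $B\in\mathrm{GL}_n(D)$ and $C$ nilpotent already used in the proof of Theorem~\ref{semi-trace}. By Lemma~\ref{nilpotent}, $C$ is semi-traceless, and the invertible case refined by the companion-matrix claim supplies $B$ as a single semi-traceless factor (any residual semi-traceless piece can be absorbed into $C$ via the same conjugation calculus, since $C$ is nilpotent). Assembling these observations yields $A$ as a product of two semi-traceless matrices in full generality, completing the proof; the companion-matrix claim above is the only substantial step throughout.
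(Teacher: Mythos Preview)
Your central auxiliary claim---that \emph{every companion matrix in $\mathrm{M}_n(D)$ is semi-traceless}---is false, and this sinks the whole approach. Take $D=\mathbb{H}$, so $F=\mathbb{R}$ is perfect, and $n=2$. The reduced trace $\mathrm{Trd}:\mathrm{M}_2(\mathbb{H})\to\mathbb{R}$, coming from the embedding $\mathrm{M}_2(\mathbb{H})\hookrightarrow\mathrm{M}_4(\mathbb{C})$, is a similarity invariant and satisfies $\mathrm{Trd}(A)=2\,\mathrm{Re}\bigl(\mathrm{tr}(A)\bigr)$. Hence every traceless matrix has $\mathrm{Trd}=0$, and therefore so does every semi-traceless matrix. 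But the invertible companion matrix
\[
C(x^2-3x+1)=\begin{pmatrix}0&-1\\1&3\end{pmatrix}
\]
has $\mathrm{Trd}=6\neq 0$, so it is \emph{not} semi-traceless. Your sketch via diagonal and transvection conjugations cannot possibly succeed here: conjugation preserves $\mathrm{Trd}$, so the set of ``attainable traces'' you describe can never contain $0$ for this matrix. The appeal to unspecified ``Herstein/Amitsur-type theorems'' and to perfectness of $F$ does not help; the obstruction is a genuine similarity invariant. Since Lemma~\ref{2companion} gives you no control over which companion matrix the factor $C$ is similar to, the invertible case collapses.

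There is a second, independent gap in the non-invertible case. Even granting your invertible case, it produces $B$ as a product of \emph{two} semi-traceless factors, not one; the factorization $A=BC$ with $C$ nilpotent then yields three semi-traceless factors. Your assertion that a ``residual semi-traceless piece can be absorbed into $C$'' is unsupported: the product of a semi-traceless matrix with a nilpotent one has no reason to be semi-traceless.

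The paper's proof avoids all of this by a single structural reduction: a theorem of Sizer guarantees that when $F$ is perfect, every matrix in $\mathrm{M}_n(D)$ is similar to a matrix lying in $\mathrm{M}_n(K)$ for some subfield $K\subseteq D$. One then applies Theorem~\ref{field} inside $\mathrm{M}_n(K)$ to write that conjugate as a product of two traceless matrices over $K$, and conjugates back to obtain two semi-traceless factors of the original matrix.
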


\begin{proof}
If $F$ is perfect, then employing \cite[Proposition 1]{Bo_Si_75}, we can detect that every matrix in $\mathrm{M}_n(D)$ is similar to a matrix over the subfield of $D$ containing $F$. Furthermore, consulting with Theorem~\ref{field}, each matrix in $\mathrm{M}_n(D)$ is a product of two semi-traceless matrices in $\mathrm{M}_n(D)$, as required.
\end{proof}

\section{Finitary matrices}

Let $D$ be a division ring. Denote by $\mathrm{M}_\infty(D)$ the ring of all \textit{finitary matrices}, that is, countably infinite matrices with only finitely many non-zero entries. Note that for any matrix $A\in\mathrm{M}_\infty(D)$, we may find a positive integer $k$ such that $A$ can be expressed as the block matrix $$A=\begin{pmatrix}
	A'&0\\0&0
\end{pmatrix},$$ where $A'\in\mathrm{M}_k(D)$.  So, $A$ is called a traceless matrix in $\mathrm{M}_\infty(D)$ if $A'$ is a traceless matrix in $\mathrm{M}_k(D)$.

Let $F$ be a field and let $F\langle\mathfrak{X}\rangle$ be the free algebra generated by the set $\mathfrak{X}=\{\mathbf{x}_1,\mathbf{x}_2,\ldots\}$, that is, the algebra of non-commutative polynomials in the variables $\mathbf{x}_1, \mathbf{x}_2,\ldots$. For any $F$-algebra $A$ and $f=f(\mathbf{x}_1,\ldots,\mathbf{x}_m)\in F\langle\mathfrak{X}\rangle$, let $f(A)=\{f(a_1,\ldots,a_m)\mid a_1,\ldots,a_m\in A\}$ and we call $f(A)$ the \textit{image} of $f$. Note that $f(A)$ is invariant under conjugation. The polynomial $f=f(\mathbf{x}_1,\ldots,\mathbf{x}_m)\in F\langle\mathfrak{X}\rangle$ is called \textit{multilinear} if $f$ is of the form:
$$f=\sum_{\sigma\in S_m}\lambda_\sigma\mathbf{x}_{\sigma(1)}\mathbf{x}_{\sigma(2)}\cdots\mathbf{x}_{\sigma(m)}$$ in which $\lambda_\sigma\in F$ and $S_m$ is the symmetric group of degree $m$.

\begin{proposition} {\rm\cite[Corollary 1.2]{Pa_Vi_21}}\label{Vitas}
Let $F$ be an infinite field and let $f\in F\langle\mathfrak{X}\rangle$ be a non-zero multilinear polynomial. Then, any traceless finitary matrices over $F$ is a image of $f$ evaluated on $\mathrm{M}_\infty(F)$.
\end{proposition}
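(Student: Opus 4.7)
The plan is to reduce the problem from $\mathrm{M}_\infty(F)$ to matrix algebras of large finite size and then apply the developed theory on images of multilinear polynomials over matrix rings. Concretely, since $A \in \mathrm{M}_\infty(F)$ is finitary and traceless, it equals $A' \oplus 0$ for some traceless $A' \in \mathrm{M}_k(F)$. Any evaluation $f(a_1, \ldots, a_m)$ on finitary matrices $a_i$ with support confined to the first $N$ rows and columns lies in $\mathrm{M}_N(F) \subseteq \mathrm{M}_\infty(F)$, so it suffices to produce, for some $N \geq k$ chosen large compared with $m = \deg f$, an evaluation whose value is $A' \oplus 0 \in \mathrm{M}_N(F)$.

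For the heart of the argument I would exploit two pillars. First, because $F$ is infinite, $f(\mathrm{M}_N(F))$ is Zariski-dense in a $\mathrm{GL}_N(F)$-invariant closed subvariety, and every multilinear polynomial evaluates into the subspace of sums of commutators, so $f(\mathrm{M}_N(F)) \subseteq \mathrm{sl}_N(F)$ whenever $f$ is not a polynomial identity of $\mathrm{M}_N(F)$. Second, after normalizing so that the monomial $x_1 x_2 \cdots x_m$ carries a non-zero coefficient, I would specialize the first $m-1$ variables to matrix units $e_{i_1, i_2}, e_{i_2, i_3}, \ldots, e_{i_{m-1}, i_m}$ chosen so that, for any non-identity permutation $\sigma$, the product $a_{\sigma(1)} a_{\sigma(2)} \cdots a_{\sigma(m-1)}$ collapses to zero. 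This forces $f(a_1, \ldots, a_{m-1}, B) = \lambda\, e_{i_1, i_m} B$ for a non-zero scalar $\lambda$, so varying $B \in \mathrm{M}_N(F)$ realizes any matrix supported in a single row. Using the ample space of $\mathrm{M}_N(F)$ for $N$ much larger than $k$, I would then stitch together several such rank-structured evaluations across disjoint index ranges to realize the prescribed $A' \oplus 0$ by a single evaluation.

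The main obstacle is controlling a \emph{single} evaluation: the set $f(\mathrm{M}_N(F))$ is not obviously additively closed, so a naive decomposition of $A'$ into a sum of simpler summands each realized in the image does not suffice. The key leverage of the finitary context, absent in the finite case, is the freedom to choose $N$ arbitrarily large, which allows one to orchestrate the $m$-tuple of arguments so that its evaluation concentrates in disjoint blocks of $\mathrm{M}_N(F)$ whose direct sum equals $A' \oplus 0$. Executing this bookkeeping uniformly across all non-zero multilinear $f$, and in particular handling polynomials whose dominant monomial corresponds to a non-identity permutation, is the delicate combinatorial core that has to be carried out carefully.
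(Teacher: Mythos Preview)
The paper does not supply a proof of this proposition; it is quoted from \cite{Pa_Vi_21} and used as a black box, so there is no in-paper argument to compare against.

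Your sketch contains one outright error and one substantive gap. The error: it is not true that a nonzero multilinear $f$ maps $\mathrm{M}_N(F)$ into $\mathrm{sl}_N(F)$; already $f(x_1,x_2)=x_1x_2$ has image all of $\mathrm{M}_N(F)$. This is harmless here, since only the reverse inclusion is at stake, but the sentence should be deleted. A smaller inaccuracy: with $a_1,\dots,a_{m-1}$ a staircase of matrix units and $a_m=B$ left free, the non-identity monomials of $f$ do \emph{not} automatically vanish, because $B$ may occupy any position in the product and need not annihilate its neighbours; one must also constrain the support of $B$ (for instance to rows and columns disjoint from $\{i_1,\dots,i_{m-1}\}$) to kill those terms, and then the conclusion ``$f(a_1,\dots,a_{m-1},B)=\lambda e_{i_1 i_m}B$ for arbitrary $B$'' is no longer available.

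The substantive gap is the ``stitching''. You correctly flag that $f(\mathrm{M}_N(F))$ is not additively closed and that the finitary setting lets $N$ grow, but the passage from ``I can hit any single-row matrix'' to ``I can hit $A'\oplus 0$ by a \emph{single} evaluation'' is exactly the non-trivial content of Vitas's theorem, and your paragraph gives no mechanism for it. A block-diagonal choice of inputs produces a block-diagonal output $C_1\oplus C_2\oplus\cdots$, not the desired $A'\oplus 0$; allowing the inputs to mix blocks reintroduces cross terms you have not controlled. Some genuinely new idea---for instance, first replacing $A'\oplus 0$ by a carefully chosen similar matrix in $\mathrm{M}_N(F)$ for which an explicit evaluation can be written down, exploiting the large supply of zero rows---is required, and the proposal as written does not supply it.
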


We thus have to following corollary to Theorem~\ref{field} and Proposition~\ref{Vitas}.

\begin{corollary}\label{Co_Vitas}
Let $F$ be an infinite field. Then, any matrix in $\mathrm{M}_\infty(F)$ is a product of two traceless matrices in $\mathrm{M}_\infty(F)$. As a result,  any matrix in $\mathrm{M}_\infty(F)$ is a product of two images of non-zero multilinear polynomials in $F\langle\mathfrak{X}\rangle$ evaluated on $\mathrm{M}_\infty(F)$.
\end{corollary}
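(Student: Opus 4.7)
The plan is to bootstrap Theorem~\ref{field} from finite matrices to finitary ones using the fact that every finitary matrix is supported in a finite upper-left block, and then apply Proposition~\ref{Vitas} to convert the two traceless factors into images of $f$.

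First I would fix $A\in\mathrm{M}_\infty(F)$ and choose, by the definition of finitary matrices, a positive integer $k$ such that
$$A=\begin{pmatrix} A' & 0 \\ 0 & 0 \end{pmatrix},\qquad A'\in\mathrm{M}_k(F).$$
Applying Theorem~\ref{field} to the field $F$ in size $k$, I can write $A'=B'C'$ with $B',C'\in\mathrm{sl}_k(F)$. Next I lift by padding: set
$$B=\begin{pmatrix} B' & 0 \\ 0 & 0\end{pmatrix},\qquad C=\begin{pmatrix} C' & 0 \\ 0 & 0\end{pmatrix},$$
viewed as elements of $\mathrm{M}_\infty(F)$. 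Block multiplication gives $BC=A$, and because the (finite) trace of $B$ equals the trace of $B'$ and similarly for $C$, both $B$ and $C$ are traceless finitary matrices in the sense defined before Proposition~\ref{Vitas}. This proves the first assertion.

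For the second assertion, I would invoke Proposition~\ref{Vitas} applied to the non-zero multilinear polynomial $f\in F\langle\mathfrak{X}\rangle$ (which is fixed from the outset). Since $B$ and $C$ are both traceless finitary matrices over the infinite field $F$, each of them lies in the image $f(\mathrm{M}_\infty(F))$, i.e.\ there exist tuples of elements of $\mathrm{M}_\infty(F)$ on which $f$ evaluates to $B$ and to $C$ respectively. Hence $A=BC$ is a product of two such images, as stated.

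The argument is essentially a routine lifting from finite to finitary matrices combined with the two cited results, so there is no genuine obstacle; the only point requiring a touch of care is checking that padding a finite traceless factorization with zeros yields a finitary traceless factorization of $A$, which is immediate from block multiplication. No further non-triviality is introduced beyond what Theorem~\ref{field} and Proposition~\ref{Vitas} already provide.
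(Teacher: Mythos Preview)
Your argument is correct and is exactly the routine lifting the paper intends: the corollary is stated without proof as an immediate consequence of Theorem~\ref{field} and Proposition~\ref{Vitas}, and your padding-by-zeros construction is precisely how one passes from $\mathrm{M}_k(F)$ to $\mathrm{M}_\infty(F)$. The only tiny quibble is that Theorem~\ref{field} is formulated for $n>1$, so when you pick $k$ you should take $k\geq 2$ (which is always possible, since one can enlarge the block); with that adjustment the proof is complete and matches the paper's approach.
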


From Theorem~\ref{trace}, we conclude that if we let $D$ to be a non-commutative division ring, then any matrix in $\mathrm{M}_\infty(F)$ is a product of twelve traceless matrices in $\mathrm{M}_\infty(D)$. Regarding semi-traceless matrices, we can decrease the number of matrices in the decomposition of Theorem~\ref{semi-trace}.

\medskip

We now manage to establish the following two chief results.

\begin{theorem}\label{new1}
Let $D$ be a non-commutative division ring. Then, any matrix in $\mathrm{M}_\infty(F)$ is a product of two semi-traceless matrices in $\mathrm{M}_\infty(D)$.
\end{theorem}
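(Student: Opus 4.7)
The plan is to bypass Theorem~\ref{semi-trace} entirely and obtain a two-factor decomposition by exploiting the extra room available in $\mathrm{M}_\infty(D)$. Given any finitary matrix $A\in\mathrm{M}_\infty(D)$, my first step would be to invoke the definition from the start of Section~4 and write $A=\begin{pmatrix}A'&0\\0&0\end{pmatrix}$ with $A'\in\mathrm{M}_k(D)$ for some positive integer $k$. The ambient may then be enlarged to $\mathrm{M}_{2k}(D)$, sitting inside $\mathrm{M}_\infty(D)$ as the top-left $(2k)\times(2k)$ block.

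The second step is to propose the explicit pair of factors
$$B=\begin{pmatrix}0&A'\\0&0\end{pmatrix},\qquad C=\begin{pmatrix}0&0\\I_k&0\end{pmatrix}\in\mathrm{M}_{2k}(D).$$
A one-line block computation yields $BC=\begin{pmatrix}A'&0\\0&0\end{pmatrix}=A$. Since $B$ and $C$ are strictly upper- and lower-triangular respectively, each is already traceless in $\mathrm{M}_{2k}(D)$, hence \emph{a fortiori} semi-traceless; when transported back to $\mathrm{M}_\infty(D)$ (padded with zeros outside the $2k$-block), they remain traceless, and in particular semi-traceless, in the sense defined at the start of Section~4. Note that Lemma~\ref{nilpotent} is not even strictly needed here, although it would also apply because $B^2=C^2=0$.

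The main obstacle is interpretational rather than computational: one must confirm that the notion of "semi-traceless" for a finitary matrix is stable under enlarging the underlying finite block, so that $B$ and $C$ as constructed above genuinely qualify as semi-traceless elements of $\mathrm{M}_\infty(D)$. Once this piece of housekeeping is settled, the entire argument collapses to the block identity $BC=A$ together with the observation that both factors have zero main diagonal. The reason only two factors suffice here, as opposed to the four required in Theorem~\ref{semi-trace}, is precisely that the infinite ambient permits us to displace the content of $A$ into a strictly off-diagonal position, so neither the Bruhat decomposition nor the companion-matrix factorisation nor an extra nilpotent factor is required.
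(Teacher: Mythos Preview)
Your argument is correct, and it is genuinely different from---and considerably simpler than---the route taken in the paper. The paper's proof reduces $A'$ to a block-diagonal sum of companion matrices via the rational canonical form (citing Cohn), then splits into three cases depending on how many of the companion blocks have size~$1$; it invokes Lemma~\ref{23} and Lemma~\ref{companion traceless} to factor each block as a product of two traceless matrices, and in one case it enlarges the ambient by a single row and column. Your approach sidesteps all of this structure theory: the single identity
\[
\begin{pmatrix}A'&0\\0&0\end{pmatrix}=\begin{pmatrix}0&A'\\0&0\end{pmatrix}\begin{pmatrix}0&0\\I_k&0\end{pmatrix}
\]
in $\mathrm{M}_{2k}(D)$ does the whole job. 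The ``housekeeping'' point you flag is indeed trivial, since padding a finite block by zeros leaves the trace unchanged, so the paper's definition of traceless in $\mathrm{M}_\infty(D)$ is independent of the choice of $k$.

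Two further observations worth making. First, your factors are not merely semi-traceless but actually \emph{traceless}, so you prove a strictly stronger statement than Theorem~\ref{new1} as written (and in fact you recover Corollary~\ref{Co_Vitas} in the commutative case without appealing to Theorem~\ref{field}). Second, nothing in your argument uses that $D$ is a division ring, let alone a non-commutative one: the factorisation works verbatim over an arbitrary ring $R$, showing that every element of $\mathrm{M}_\infty(R)$ is a product of two traceless (indeed nilpotent) finitary matrices. What the paper's longer proof buys, by contrast, is some control on the size of the enlargement (at most one extra row and column rather than doubling), which could matter if one wanted sharper statements for finite $\mathrm{M}_n(D)$; for the infinite ambient of Theorem~\ref{new1} this is irrelevant, and your argument is clearly preferable.
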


\begin{proof}
Let $A\in\mathrm{M}_\infty(D)$. Then, there exists a positive integer $k$ such that $A$ can be expressed as the block matrix $$A=\begin{pmatrix}
		A'&0\\0&0
	\end{pmatrix},$$ where $A'\in\mathrm{M}_k(D)$. According to \cite[Section 8.4, Page 505]{Bo_Co_85}, $A'$ is similar to the block diagonal matrix $A'_{k_1}\oplus\cdots\oplus A'_{k_t}$ of companion matrices $A'_{k_1}\in\mathrm{M}_{k_1}(D),\ldots,A'_{k_t}\in\mathrm{M}_{k_t}(D)$ in which $k_1+\cdots+k_t=k$, that is, there exists $P\in\mathrm{GL}_k(D)$ such that $P^{-1}A'P=A'_{k_1}\oplus\cdots\oplus A'_{k_t}$. We divide the proof into the following cases:

\medskip
	
\noindent\textbf{Case 1:} If $k_1,\ldots,k_t$ are all different from $1$, then by Lemma~\ref{companion traceless}, $A'$ is a product of two semi-traceless matrices in $\mathrm{M}_k(D)$. Therefore, $A$ is a product of two semi-traceless matrices in $\mathrm{M}_\infty(D)$.

\medskip
	
\noindent\textbf{Case 2:} There is only $k_i\in\{k_1,\ldots,k_t\}$ such that $k_i=1$. Without loss of generality, we can assume $k_t=1$. Instead of considering $A'$, we set $A''=\begin{pmatrix}
		A'&0\\0&0
	\end{pmatrix}\in\mathrm{M}_{k+1}(D)$. Then, one verifies that $$\begin{pmatrix}
	P&0\\0&1
	\end{pmatrix}^{-1}A''\begin{pmatrix}
	P&0\\0&1
	\end{pmatrix}=A'_{k_1}\oplus\cdots\oplus A'_{k_{t-1}}\oplus\begin{pmatrix}
	A'_{k_t}&0\\0&0
	\end{pmatrix}.$$ Note that $\begin{pmatrix}
	A'_{k_t}&0\\0&0
	\end{pmatrix}$ is a diagonal matrix in $\mathrm{M}_2(D)$ and $A'_{k_1},\ldots,A'_{k_{t-1}}$ are companion matrices of size greater than $1$. According to Lemma~\ref{23} and Lemma~\ref{companion traceless}, $A''$ is a product of two traceless matrices in $\mathrm{M}_{k+1}(D)$. Therefore, $A$ is a product of two semi-traceless matrices in $\mathrm{M}_\infty(D)$.

\medskip
	
\noindent\textbf{Case 3:} There is at least two $k_i\in\{k_1,\ldots,k_t\}$ such that $k_i= 1$. Similarly,  by using Lemma~\ref{23} and Lemma~\ref{companion traceless}, $A'$ is a product of two traceless matrices in $\mathrm{M}_{k}(D)$. Therefore, $A$ is a product of two semi-traceless matrices in $\mathrm{M}_\infty(D)$.
\end{proof}

\begin{theorem}\label{image1}
Let $D$ be a non-commutative division ring with center $F$. Then, any matrix in $\mathrm{M}_\infty(D)$ is a product of at most seven images of non-zero multilinear polynomials in $F\langle\mathfrak{X}\rangle$ evaluated on $\mathrm{M}_\infty(D)$.
\end{theorem}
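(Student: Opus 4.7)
The plan is to decompose $A$ multiplicatively as a quasi-invertible piece times a nilpotent piece, show the nilpotent piece is a single image of $f$ via its Jordan normal form over $F$, and show the quasi-invertible piece is a product of six images by combining Theorem~\ref{semi-trace} with a refinement that expresses each semi-traceless factor as a product of two images. To implement this, I would write $A = \begin{pmatrix} A' & 0 \\ 0 & 0 \end{pmatrix}$ with $A' \in \mathrm{M}_k(D)$ and, in the generic case where $A'$ is neither invertible nor nilpotent, apply \cite[Corollary 7]{Pa_Ca_16} to factor $A' = B'N'$ with $B' \in \mathrm{GL}_k(D)$ and $N' \in \mathrm{M}_k(D)$ nilpotent; this lifts to $A = \tilde{B}\tilde{N}$ inside $\mathrm{M}_\infty(D)$.

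The nilpotent matrix $\tilde{N}$ is similar over $\mathrm{GL}_\infty(D)$ to its Jordan normal form, whose entries are all $0$'s and $1$'s and which is traceless; since $0, 1 \in F$, this form lies in $\mathrm{M}_\infty(F)$. Proposition~\ref{Vitas} then presents it as an image of $f$ on $\mathrm{M}_\infty(F) \subseteq \mathrm{M}_\infty(D)$, and by $\mathrm{GL}_\infty(D)$-conjugation invariance of $f(\mathrm{M}_\infty(D))$, the matrix $\tilde{N}$ is itself a single image.

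For $\tilde{B}$, Theorem~\ref{semi-trace} gives $B' = S_1 S_2 S_3$ in $\mathrm{M}_k(D)$ with each $S_i$ semi-traceless, and embedding yields $\tilde{B} = \tilde{S}_1\tilde{S}_2\tilde{S}_3$ in $\mathrm{M}_\infty(D)$. The remaining task is to show that every finitary semi-traceless matrix $\tilde{S}$ is a product of at most two images of $f$. For this, I would conjugate $\tilde{S}$ to a traceless finitary matrix, analyze the rational canonical form of its finite nonzero block (essentially rerunning the case analysis from the proof of Theorem~\ref{new1}), and regroup the companion blocks so that the two factors that appear become conjugate in $\mathrm{GL}_\infty(D)$ to traceless finitary matrices over $F$. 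Each such factor then lies in $f(\mathrm{M}_\infty(D))$ by Proposition~\ref{Vitas} and conjugation invariance, producing the desired $3 \cdot 2 + 1 = 7$ image bound.

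The main obstacle I expect is this final reduction from each semi-traceless finitary matrix to two images. A traceless matrix over $D$ is generally not similar over $D$ to any matrix over $F$—only to one with zero diagonal—so one cannot simply change basis into $\mathrm{M}_\infty(F)$. The delicate step is using the zero padding and block regrouping available in $\mathrm{M}_\infty(D)$ (as in Cases 2 and 3 of the proof of Theorem~\ref{new1}) to split each traceless block into two factors each of which becomes $\mathrm{GL}_\infty(D)$-conjugate to a matrix over $F$. The peripheral cases where $A'$ is already invertible or nilpotent in $\mathrm{M}_k(D)$ reduce directly to $6$ or $1$ images respectively, so $7$ stands as the universal bound.
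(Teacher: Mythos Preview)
Your sketch shares the paper's opening move---factor $A' = GN$ with $G$ invertible and $N$ nilpotent, and realize the finitary nilpotent as a single image of $f$ via its Jordan form over $F$ together with Proposition~\ref{Vitas}---but diverges on the treatment of $G$, and that is where a real gap lies.

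You propose to write $G$ as a product of three semi-traceless matrices via Theorem~\ref{semi-trace} and then to show each semi-traceless factor is a product of two images by ``regrouping the companion blocks so that the two factors become conjugate to traceless matrices over $F$.'' You correctly flag this as the main obstacle, and it is in fact unresolved: the semi-traceless factors coming out of Theorem~\ref{semi-trace} are conjugate to companion matrices or to the explicit traceless factors in Lemma~\ref{companion traceless}, and all of these have entries genuinely in $D$. Nothing in the proof of Theorem~\ref{new1}---padding by zeros, regrouping blocks, or the case split on the sizes $k_i$---produces factors conjugate to matrices over a subfield; it only produces factors that are traceless over $D$. Being traceless over $D$ does not let you invoke Proposition~\ref{Vitas}, which requires matrices over a field, so the passage from ``semi-traceless'' to ``image of $f$'' is not justified.

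The paper sidesteps this entirely by dropping the semi-traceless decomposition for $G$ and instead using Lemma~\ref{lemma_cheohoa}: one conjugates $G$ to $XHY$ with $X,Y$ unipotent and $H=\mathrm{diag}(1,\dots,1,h)$. The crucial gain is that each of $X,Y,H$ is, up to conjugacy, a matrix over a \emph{field}: $X$ and $Y$ are unipotent, hence by Proposition~\ref{UTLT} similar to direct sums of Jordan blocks $J_{m_i}(1)$ over $F$, while $H$ already lies in $\mathrm{M}_n\bigl(F(h)\bigr)$. Corollary~\ref{Co_Vitas} then gives each of them as a product of two images, yielding $2+2+2+1=7$. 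The missing idea in your plan is precisely this: one needs factors similar to matrices over a field, and ``semi-traceless over $D$'' is too weak a property to deliver that.
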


To prove Theorem~\ref{image1}, we need a series technical claims as follows:

\medskip

In \cite[Lemma 2.1]{Pa_EgGo_19}, it was proven that if $D$ is finite dimensional over its center, then every non-central matrix in $\mathrm{GL}_n(D)$ is similar to $XHY$ where $X$ is a lower triangular matrix with the main diagonal entries are $1$ and $Y$ is an upper triangular matrix with the main diagonal entries are $1$ and $H$ is the diagonal matrix with the main diagonal entries are $1,1,\dots,1,h$ for some $h\in D\setminus\{0\}$. In fact, this result still holds for arbitrary division rings, because the technique of the proof does not use the finiteness of their dimensions.

\medskip

We, thereby, arrive at the following.

\begin{lemma}\label{lemma_cheohoa} \cite[Lemma 2.1]{Pa_EgGo_19}  Let $D$ be a division ring and $n\ge 2$ an integer. If  $A\in \mathrm{GL}_n(D)$ is non-central, then there exists $P\in \mathrm{GL}_n(D)$ such that $P^{-1}AP$ has the form $$P^{-1}AP=XHY,$$ where $X$ is a lower triangular matrix with the main diagonal entries are $1$ and $Y$ is an upper triangular matrix with the main diagonal entries are $1$ and $H$ is the diagonal matrix with the main diagonal entries are $1,1,\dots,1,h$ for some $h\in D\setminus\{0\}$.
\end{lemma}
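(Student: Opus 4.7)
The plan is to carry out the argument of \cite[Lemma 2.1]{Pa_EgGo_19} and verify that nowhere in it is the finite-dimensionality of $D$ over its center actually invoked. The proof proceeds by induction on $n$, with non-centrality of $A$ providing the slack needed to perform a controlled Gaussian-style reduction at each step.

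For the \emph{inductive reduction}, since $A$ is non-central it cannot act as a right scalar on every vector, so there exists $v \in D^n$ such that $v$ and $Av$ are right $D$-linearly independent. Completing $\{v, Av\}$ to a right $D$-basis yields a conjugate $B$ of $A$ whose first column equals the standard vector $e_2$, a Hessenberg-type form. Left-multiplying $B$ by a lower unitriangular $L$ and right-multiplying by an upper unitriangular $U$ sweeps out the rest of the first column and first row, producing an equivalent matrix of block form $\left(\begin{smallmatrix} 1 & 0 \\ 0 & B' \end{smallmatrix}\right)$ with $B' \in \mathrm{GL}_{n-1}(D)$. If $B'$ is non-central, apply the inductive hypothesis to obtain $B' = X'H'Y'$ with $H' = \mathrm{diag}(1,\ldots,1,h)$ and $X', Y'$ unitriangular; padding by a $1\times 1$ block and merging with $L, U$, while using the fact that products of lower (respectively upper) unitriangular matrices preserve their shape, delivers the desired factorization $P^{-1}AP = XHY$. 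If instead $B'$ degenerates to a scalar $\lambda I_{n-1}$, a direct explicit factorization $\lambda I_{n-1} = X''\,\mathrm{diag}(1,\ldots,1,\lambda^{n-1})\,Y''$ with $X'', Y''$ unitriangular handles this case and also provides the base case $n=2$ of the induction.

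The main obstacle I anticipate is \emph{not} the existence of an $LDU$-type decomposition --- that part is routine Gaussian elimination, entirely valid over any division ring --- but rather forcing the diagonal factor to have the specific form $\mathrm{diag}(1,\ldots,1,h)$ instead of a general $\mathrm{diag}(h_1,\ldots,h_n)$. Achieving this requires iteratively absorbing each $h_i$ with $i<n$ into the neighbouring unitriangular factors via elementary-matrix identities that hold over any division ring. These identities must be applied with care under non-commutativity, and the crucial verification is that nowhere in the cyclic-vector step, in the inductive reduction, nor in the diagonal-normalization step do we secretly invoke a characteristic polynomial, a Cayley--Hamilton identity, or any other ingredient requiring $[D:Z(D)]<\infty$. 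Once this check is made, the result follows verbatim for arbitrary division rings.
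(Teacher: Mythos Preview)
Your high-level plan is exactly what the paper does: it gives no self-contained proof but simply remarks, in the paragraph preceding the lemma, that the argument of \cite[Lemma~2.1]{Pa_EgGo_19} never invokes finite-dimensionality of $D$ over its center, so the conclusion holds for arbitrary division rings.

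That said, the sketch you supply of the Egorchenkova--Gordeev argument contains two concrete slips. First, if the conjugate $B$ has first column $e_2$ (as produced by the basis $\{v,Av,\ldots\}$), then for any lower unitriangular $L$ and upper unitriangular $U$ the $(1,1)$-entry of $LBU$ equals the $(1,1)$-entry of $B$, namely $0$; hence you cannot reach $\left(\begin{smallmatrix}1&0\\0&B'\end{smallmatrix}\right)$ this way. One must first arrange a nonzero $(1,1)$-pivot (e.g.\ by an additional permutation-conjugation) before sweeping. Second, the claimed identity $\lambda I_{n-1}=X''\,\mathrm{diag}(1,\ldots,1,\lambda^{n-1})\,Y''$ with $X''$ lower unitriangular and $Y''$ upper unitriangular is false for $n-1\ge 2$ and $\lambda\ne 1$: the $LDU$ factorization with unitriangular outer factors is unique over any division ring, and for $\lambda I_{n-1}$ it is $I\cdot(\lambda I_{n-1})\cdot I$. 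The scalar case therefore needs a genuinely different treatment (typically one more conjugation step, exploiting that the full $n\times n$ block $\mathrm{diag}(1,\lambda,\ldots,\lambda)$ is still non-central when $\lambda\ne 1$). Neither slip touches the point that matters for the paper --- that finite-dimensionality is nowhere used --- but as written the sketch does not constitute a valid proof.
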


Let $D$ be a division ring. For two elements $\alpha,\beta \in D$, denote by $$J_{n}(\alpha,\beta)=\begin{pmatrix}
	\alpha&\beta&0&\cdots&0&0&0\\
	0&\alpha&\beta&\cdots&0&0&0\\
	0&0&\alpha&\cdots&0&0&0\\
	\vdots&\vdots&\vdots&\ddots&\vdots&\vdots&\vdots\\
	0&0&0&\cdots&\alpha&\beta&0\\
	0&0&0&\cdots&0&\alpha&\beta\\
	0&0&0&\cdots&0&0&\alpha
\end{pmatrix}
\in \mathrm{M}_{n}(D)$$ is the upper triangular matrix with entries the main diagonal and the first super diagonal are respectively $\alpha$ and $\beta$. For short, we will just write $J_{n}(\alpha)$ for $J_{n}(\alpha,1)$.

\medskip

We also need the following technicality which we list below for completeness of the exposition.

\begin{lemma}\cite[Theorem 7]{Pa_Djo_85} \label{Djo}
Let $D$ be a division ring with center $F$, $n$ a positive integer and $A\in \mathrm{M}_n(D)$. If $A$ is triangulate and algebraic over $F$, then there exist positive integers $s, m_1,\dots,m_s$ and elements $\alpha_1,\alpha_2,\dots,\alpha_s,\beta_1,\beta_2,\dots,\beta_s\in D$ such that $A$ is similar to $\bigoplus_{i=1}^sJ_{m_i}(\alpha_i,\beta_i)$ in which $m_1+m_2+\dots+m_s=n$ and $\beta_i\notin \{\alpha_i a-a\alpha_i\mid a\in D\}$ for every $i=1,2,\dots,s$. Additionally, for each $i=1,2,\dots, s$, if $\alpha_i$ is separable over $F$, then $\beta_i$ may be chosen to be $1$.
\end{lemma}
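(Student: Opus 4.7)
The plan is to reduce $A$ to the prescribed canonical form via a sequence of similarity transformations, exploiting that every element of $D$ appearing on the diagonal after triangulation is algebraic over $F$.

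Since $A$ is triangulate, after an initial conjugation I may assume $A$ is upper triangular with diagonal entries $a_1,\dots,a_n\in D$, all algebraic over $F$. The core step is to separate the diagonal according to $D^{\times}$-conjugacy classes. The enabling technical tool is the solvability of the Sylvester-type equation: if $\alpha,\beta\in D$ are algebraic over $F$ and not conjugate in $D^{\times}$, then the $F$-linear map $X\mapsto \alpha X-X\beta$ on $D$ is injective (any $X$ in its kernel would intertwine the cyclic left $F[t]$-modules determined by left multiplication by $\alpha$ and right multiplication by $\beta$, which are non-isomorphic when $\alpha,\beta$ are non-conjugate), hence surjective on the relevant finite-dimensional $F$-subspaces. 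Iteratively conjugating by unitriangular matrices whose non-trivial entries are constructed to solve such Sylvester equations clears the off-diagonal entries whose row and column indices correspond to diagonal entries in different conjugacy classes. A subsequent permutation similarity then regroups the diagonal, producing a block diagonal $B_1\oplus\cdots\oplus B_r$ in which every $B_k$ is upper triangular with all diagonal entries in a single $D^{\times}$-conjugacy class.

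Within each block $B_k$, conjugation by an appropriate diagonal matrix (built from the conjugators $c_i$ witnessing $a_i=c_i\alpha_k c_i^{-1}$) makes the diagonal constant at a representative $\alpha_k\in D$, so that $B_k=\alpha_k I+N_k$ with $N_k$ strictly upper triangular. A cyclic decomposition of the nilpotent operator $N_k$, acting on the underlying $D$-vector space of the block, splits $B_k$ further into cyclic pieces. Each cyclic piece of size $m$, after choice of a cyclic vector, is similar to a generalized Jordan block $J_m(\alpha_k,\beta)$ for some nonzero $\beta\in D$. Within such a Jordan block, conjugation by a carefully chosen unipotent matrix modifies the superdiagonal entry $\beta$ by an element of $\{\alpha_ka-a\alpha_k\mid a\in D\}$; iterating, I select a representative $\beta_k$ outside this additive subset, which gives the desired normalization. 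When $\alpha_k$ is separable over $F$, a standard trace/derivation argument on $F(\alpha_k)$ yields $1\notin\{\alpha_k a-a\alpha_k\mid a\in D\}$, and a diagonal conjugation inside $D^{\times}$ then scales $\beta_k$ to $1$.

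The main obstacle will be the block-separation step: it requires both the solvability of the Sylvester equation $\alpha X-X\beta=C$ for non-conjugate algebraic $\alpha,\beta\in D$ and careful bookkeeping of iterated unitriangular conjugations so that clearing one off-diagonal block does not undo previous progress. The cyclic decomposition inside each constant-diagonal block is also delicate, since the non-commutativity of $D$ precludes direct invocation of invariant-factor theory over a principal ideal domain; one must instead manipulate the nilpotent endomorphism $N_k$ directly on a left $D$-module, and the choice of cyclic vectors must be compatible with the already-fixed conjugation pattern.
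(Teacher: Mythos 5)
The paper itself does not prove this lemma; it is imported verbatim as a citation to Djokovic \cite{Pa_Djo_85}, so there is no in-paper argument to compare against. Judged as a self-contained sketch, your proposal has the right broad shape, but two load-bearing steps are not actually closed. In the Sylvester step, injectivity of $X\mapsto\alpha X-X\beta$ on $D$ is correct (a nonzero kernel element of a division ring is invertible and conjugates $\alpha$ to $\beta$), but the passage to surjectivity via ``the relevant finite-dimensional $F$-subspaces'' is unsupported: the target $C$ of the equation $\alpha X-X\beta=C$ is an arbitrary entry of a matrix over $D$, need not be algebraic over $F$, and need not lie in any finite-dimensional $\delta$-stable $F$-subspace when $[D:F]=\infty$, so injectivity alone yields nothing. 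One genuinely needs a structural input here (for instance, that non-conjugate algebraic elements have coprime minimal polynomials over $F$, so that $L_\alpha - R_\beta$ satisfies a polynomial with nonzero constant term and is therefore invertible), and your sketch does not supply it.

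The second gap is the normalization of the superdiagonal entry. Conjugation of $J_m(\alpha_k,\beta)$ by unipotent or diagonal matrices of the sort you describe only moves $\beta$ within its coset modulo $\operatorname{im}(\delta_{\alpha_k})$, where $\delta_{\alpha_k}(a)=\alpha_k a-a\alpha_k$; if the $\beta$ produced by your cyclic decomposition happens already to lie in $\operatorname{im}(\delta_{\alpha_k})$, no amount of ``iterating'' such conjugations can pull it outside that set. The actual mechanism in Djokovic's theorem is that $\beta\in\operatorname{im}(\delta_{\alpha_k})$ forces the cyclic piece to split further, so the requirement $\beta_i\notin\operatorname{im}(\delta_{\alpha_i})$ is achieved by passing to a finest decomposition rather than by conjugating a fixed one, and this interplay between the nilpotent cyclic structure and the image of the inner derivation is the heart of the proof. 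Likewise the separability clause is asserted via ``a standard trace/derivation argument'' without being carried out. In short, the proposal is an honest outline that correctly names the hard points, but the hard points are precisely what it leaves open.
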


We now have all the ingredients necessary to show the truthfulness of the following key result.

\begin{proposition}\label{UTLT}
Let $D$ be a division ring and $n\geq 2$ an integer. If $A\in\mathrm{M}_n(D)$ is unipotent, then $A$ is similar to the diagonal block matrix $\bigoplus_{i=1}^s J_{m_i}(1)$ in which $m_1+m_2+\dots+m_s=n$.
\end{proposition}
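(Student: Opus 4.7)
The plan is to invoke Lemma~\ref{Djo} applied to the unipotent matrix $A$, then pin down both the diagonal parameter $\alpha_i$ and the super-diagonal parameter $\beta_i$ to equal $1$. First I verify the hypotheses of Lemma~\ref{Djo}. Writing $A = I + N$ with $N := A - I$ nilpotent, the matrix $N$ is similar to a strictly upper triangular matrix (this is standard over division rings and already underlies Lemma~\ref{nilpotent}), so $A$ itself is triangulable. Moreover, $(A-I)^n = 0$ shows that $A$ satisfies $(x-1)^n \in F[x]$, so $A$ is algebraic over $F$. Lemma~\ref{Djo} thus yields positive integers $s, m_1, \ldots, m_s$ with $m_1+\cdots+m_s = n$ and elements $\alpha_i, \beta_i \in D$ such that $A$ is similar to $\bigoplus_{i=1}^s J_{m_i}(\alpha_i, \beta_i)$.

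The core step is forcing $\alpha_i = 1$ for every $i$. Since $A$ is unipotent and similarity preserves this property, the direct sum $\bigoplus_{i=1}^s J_{m_i}(\alpha_i, \beta_i)$ is unipotent, hence so is each block $J_{m_i}(\alpha_i, \beta_i)$. Subtracting the identity, the upper triangular matrix $J_{m_i}(\alpha_i, \beta_i) - I$ carries $\alpha_i - 1$ along its main diagonal and must be nilpotent. Because the diagonal entries of the $k$-th power of an upper triangular matrix over a division ring are the $k$-th powers of the original diagonal entries (the diagonal of a product of upper triangular matrices is the entrywise product of the diagonals), we obtain $(\alpha_i - 1)^k = 0$ for some $k$, and since $D$ has no zero divisors, $\alpha_i = 1$.

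Finally, $\alpha_i = 1 \in F$ has minimal polynomial $x - 1$ over $F$, which is obviously separable. The last assertion of Lemma~\ref{Djo} then allows us to choose $\beta_i = 1$ for every $i$, giving $A \sim \bigoplus_{i=1}^s J_{m_i}(1, 1) = \bigoplus_{i=1}^s J_{m_i}(1)$ as required. The main conceptual hurdle is the unipotent-implies-$\alpha_i=1$ step; beyond this, the rest is a direct application of the machinery (triangulability of nilpotents, Lemma~\ref{Djo}, and separability of elements of $F$) already available in the excerpt.
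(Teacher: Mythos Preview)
Your proof is correct and follows essentially the same route as the paper: triangulate $A$ via the nilpotency of $A-I$, invoke Lemma~\ref{Djo}, force $\alpha_i=1$, and then use the separability clause to set $\beta_i=1$. Your justification of $\alpha_i=1$ (via the diagonal of powers of an upper triangular matrix) is in fact more explicit than the paper's, which simply asserts that the diagonal entries must all be $1$ because one triangularization of $A$ has this property.
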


\begin{proof}
Let $A\in\mathrm{M}_n(D)$ be unipotent. Then, $\mathrm I_n-A$ is nilpotent. Thanks to \cite[Lemma 3.2]{Pa_AbLe_21}, one sees that $\mathrm I_n-A$ is similar to a strictly upper triangular matrix, which implies that $A$ is similar to a upper triangular matrix whose diagonal entries are all $1$. Employing Lemma~\ref{Djo}, one checks that $A$ is similar to the diagonal block matrix $\bigoplus_{i=1}^s J_{m_i}(\alpha_i,\beta_i)$ in which $m_1+m_2+\dots+m_s=n$ and $\beta_i\notin \{\alpha_i a-a\alpha_i\mid a\in D\}$ for every $i=1,2,\dots,s$. Moreover, since $A$ is similar to a upper triangular matrix whose diagonal entries are all $1$, we have that $\alpha_i$'s are $1$. Furthermore, using Lemma~\ref{Djo} again, we deduce that $\beta_i$'s are $1$. Therefore, $A$ is similar to the diagonal block matrix $\bigoplus_{i=1}^s J_{m_i}(1)$, as required.
\end{proof}

We now continue with

\begin{proof}[The proof of Theorem~\ref{image1}]
Let $A\in\mathrm{M}_\infty(D)$. Then, there exists a positive integer $n$ such that $A$ can be expressed as the block matrix $$A=\begin{pmatrix}
		A'&0\\0&0
	\end{pmatrix},$$ where $A'\in\mathrm{M}_n(D)$. By using \cite[Corollary 7]{Pa_Ca_16}, there exist $G\in\mathrm{GL}_n(D)$ and $N$ is nilpotent in $\mathrm{M}_n(D)$ such that $A$ can be expressed as the product of $G$ and $N$, writing $A=GN$. If $G$ is central, then by\cite[$\S21$, Theorem 1, Page 140]{Bo_Dra_83}, there exists $\lambda\in F$ such that $G=\lambda\mathrm{I}_n$. Therefore, if $G$ is central, then by using Corollary~\ref{Co_Vitas}, the block matrix $$\begin{pmatrix}
	G&0\\0&0
	\end{pmatrix}$$ is a product of two images of non-zero multilinear polynomials in $F\langle\mathfrak{X}\rangle$ evaluated on $\mathrm{M}_\infty(D)$. Now, we consider $G$ is non-central. According to Proposition~\ref{lemma_cheohoa}, there exists $P\in \mathrm{GL}_n(D)$ such that $P^{-1}GP$ has the form $$P^{-1}GP=XHY,$$ where $X$ is a lower triangular matrix with the main diagonal entries are $1$ and $Y$ is an upper triangular matrix with the main diagonal entries are $1$ and $H$ is the diagonal matrix with the main diagonal entries are $1,1,\dots,1,h$ for some $h\in D\setminus\{0\}$. According to Proposition~\ref{UTLT}, both $X$ and $Y$ are similar to matrices over $F$. On the other hand, $H$ is a matrix over the subfield $F(h)$ generated by $h$ over $F$. Again, by using Corollary~\ref{Co_Vitas}, the block matrices $\begin{pmatrix}
	X&0\\0&0
	\end{pmatrix},\begin{pmatrix}
	Y&0\\0&0
	\end{pmatrix}$ and $\begin{pmatrix}
	H&0\\0&0
	\end{pmatrix}$ are products of two images of non-zero multilinear polynomials in $F\langle\mathfrak{X}\rangle$ evaluated on $\mathrm{M}_\infty(D)$. Therefore, the block matrix $\begin{pmatrix}
	G&0\\0&0
	\end{pmatrix}$ is a product of six images of non-zero multilinear polynomials in $F\langle\mathfrak{X}\rangle$ evaluated on $\mathrm{M}_\infty(D)$. Furthermore, by using \cite[Lemma 3.2]{Pa_AbLe_21}, $N$ is similar to a traceless matrix over $F$, and applying Proposition~\ref{Vitas}, the block matrix $\begin{pmatrix}
	N&0\\0&0
	\end{pmatrix}$ is a image of non-zero multilinear polynomials in $F\langle\mathfrak{X}\rangle$ evaluated on $\mathrm{M}_\infty(D)$. Consequently, $A$ is a product of seven images of non-zero multilinear polynomials in $F\langle\mathfrak{X}\rangle$ evaluated on $\mathrm{M}_\infty(D)$, as needed.
\end{proof}

In view of Proposition~\ref{semi-2}, and by using Proposition~\ref{Vitas}, we deduce the following result.

\begin{proposition}\label{Minfty}
Let $D$ be division ring with center $F$. If $F$ is perfect, then every matrix in $\mathrm{M}_\infty(D)$ is a product of two images of non-zero multilinear polynomials in $F\langle\mathfrak{X}\rangle$ evaluated on $\mathrm{M}_\infty(D)$.
\end{proposition}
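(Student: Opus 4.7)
The plan is to combine Proposition~\ref{semi-2} with Proposition~\ref{Vitas} using essentially the same ``block extension'' mechanism that powered the proof of Theorem~\ref{image1}, but in a much shorter form since here we only need a two-factor semi-traceless decomposition instead of the more elaborate Bruhat-style decomposition.

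First, given $A\in\mathrm{M}_\infty(D)$, I would choose $n$ large enough that
$A=\begin{pmatrix}A'&0\\0&0\end{pmatrix}$ with $A'\in\mathrm{M}_n(D)$. Applying Proposition~\ref{semi-2} to $A'$ produces semi-traceless matrices $B_1,B_2\in\mathrm{M}_n(D)$ with $A'=B_1B_2$. The key observation — extracted by inspecting the proof of Proposition~\ref{semi-2} rather than merely quoting its statement — is that the similarity is uniform: there is a single $P\in\mathrm{GL}_n(D)$ and a subfield $K$ of $D$ with $F\subseteq K$ such that $T_i:=P^{-1}B_iP\in\mathrm{M}_n(K)$ is actually traceless for $i=1,2$. (This is exactly what the Bogomolov--Silberberg-type statement \cite{Bo_Si_75} combined with Theorem~\ref{field} delivers, since Theorem~\ref{field} is applied inside $\mathrm{M}_n(K)$.)

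Next I would lift everything to finitary matrices. Writing
$\tilde B_i=\begin{pmatrix}B_i&0\\0&0\end{pmatrix}$ and $\tilde T_i=\begin{pmatrix}T_i&0\\0&0\end{pmatrix}$, the identity $A=\tilde B_1\tilde B_2$ is immediate from block multiplication. Conjugating the finite $n\times n$ block by $P$ and fixing all other coordinates gives an element of $\mathrm{GL}_\infty(D)$ sending $\tilde T_i$ to $\tilde B_i$, so each $\tilde B_i$ is conjugate inside $\mathrm{M}_\infty(D)$ to the traceless finitary matrix $\tilde T_i$, whose entries lie in $K$.

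Finally, I would invoke Proposition~\ref{Vitas}, but applied over the (infinite) field $K$ rather than over $F$: any non-zero multilinear polynomial $f\in F\langle\mathfrak{X}\rangle$ also lies in $K\langle\mathfrak{X}\rangle$, so Proposition~\ref{Vitas} yields $\tilde T_i\in f(\mathrm{M}_\infty(K))\subseteq f(\mathrm{M}_\infty(D))$. Since $f(\mathrm{M}_\infty(D))$ is invariant under conjugation by $\mathrm{GL}_\infty(D)$ (because substituting conjugates into $f$ conjugates the output), we get $\tilde B_i\in f(\mathrm{M}_\infty(D))$ as well, and the decomposition $A=\tilde B_1\tilde B_2$ exhibits $A$ as a product of two such images. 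The only point that needs care — and which I regard as the main (mild) obstacle — is the bookkeeping around passing from similarity in $\mathrm{M}_n(D)$ to similarity within $\mathrm{M}_\infty(D)$ together with the fact that one must apply Proposition~\ref{Vitas} to the enlargement field $K\supseteq F$, not to $F$ itself; once one notes that $F\langle\mathfrak{X}\rangle\subseteq K\langle\mathfrak{X}\rangle$ and that $K$ is infinite whenever $F$ is, the argument goes through cleanly.
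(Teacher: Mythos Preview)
Your proposal is correct and follows exactly the route the paper intends: the paper's own ``proof'' is the single sentence ``In view of Proposition~\ref{semi-2}, and by using Proposition~\ref{Vitas}, we deduce the following result,'' and you have spelled out precisely the details behind that sentence (similarity to a matrix over a subfield $K\supseteq F$, the two-factor traceless decomposition inside $\mathrm{M}_n(K)$ via Theorem~\ref{field}, the block embedding into $\mathrm{M}_\infty$, and conjugation-invariance of $f(\mathrm{M}_\infty(D))$). Your observation that a \emph{single} conjugator $P$ works for both factors, read off from the proof of Proposition~\ref{semi-2}, is correct and makes the write-up clean, though strictly speaking the argument would also go through with two separate conjugators.
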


We now arrange to prove the following major result.

\begin{theorem}\label{image2}
Let $D$ be a non-commutative division ring which is finite-dimensional over the center $F$ of $D$. Then, any matrix in $\mathrm{M}_\infty(D)$ is a product of at most five images of non-zero multilinear polynomials in $F\langle\mathfrak{X}\rangle$ evaluated on $\mathrm{M}_\infty(D)$.
\end{theorem}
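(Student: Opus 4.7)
My plan is to combine the invertible-plus-nilpotent factorisation used in the proof of Theorem~\ref{image1} with the two-factor semi-traceless decomposition from Theorem~\ref{new1}, exploiting finite-dimensionality of $D$ over $F$ to control the image-count of each semi-traceless factor. Given $A\in\mathrm{M}_\infty(D)$, write $A=\begin{pmatrix}A'&0\\0&0\end{pmatrix}$ with $A'\in\mathrm{M}_n(D)$, and use \cite[Corollary~7]{Pa_Ca_16} to factor $A'=GN$ with $G\in\mathrm{GL}_n(D)$ and $N$ nilpotent, so that $A=\widetilde{G}\,\widetilde{N}$ in $\mathrm{M}_\infty(D)$, where the tildes denote the evident block embeddings. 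As in the proof of Theorem~\ref{image1}, $N$ is triangulate and algebraic over $F$ with the single separable eigenvalue $0$, so Lemma~\ref{Djo} makes $N$ similar in $\mathrm{M}_n(D)$ to a standard nilpotent Jordan form $\bigoplus_j J_{m_j}(0,1)\in\mathrm{M}_n(F)$, which is traceless; by Proposition~\ref{Vitas} and conjugation-invariance of $f(\mathrm{M}_\infty(D))$, the embedding $\widetilde{N}$ is a single image, contributing $1$ to the total.

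For $\widetilde{G}$, I would apply Theorem~\ref{new1} to obtain $\widetilde{G}=S_1S_2$ with each $S_i$ semi-traceless in $\mathrm{M}_\infty(D)$; the heart of the proof is to show each $S_i$ is a product of at most two images, yielding $4$ further images and so a grand total of $5$. Each $S_i$ is similar in $\mathrm{M}_\infty(D)$ to a traceless finitary matrix $T_i$, and since $D$ is finite-dimensional over $F$ every finitary matrix over $D$ is algebraic over $F$; Lemma~\ref{Djo} therefore realises $T_i$, up to similarity, as a block-diagonal sum $\bigoplus_j J_{m_j}(\alpha_j,\beta_j)$. The crucial technical point is that, for each block, $\alpha_j$ and $\beta_j$ can be chosen to lie in a common commutative subfield $K_j$ of $D$ containing $F$: in the separable case Lemma~\ref{Djo} directly supplies $\beta_j=1$ and $K_j=F(\alpha_j)$, while in the inseparable case one picks $\beta_j$ in a maximal commutative subfield of $D$ containing $F(\alpha_j)$ (of $F$-dimension equal to the index of $D$), chosen to avoid the forbidden coset $\{\alpha_j a-a\alpha_j:a\in D\}$.

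With each block $J_{m_j}(\alpha_j,\beta_j)$ living in $\mathrm{M}_{m_j}(K_j)$ for an infinite commutative subfield $K_j\supseteq F$, Corollary~\ref{Co_Vitas} applied over $K_j$ (using $F\langle\mathfrak{X}\rangle\subseteq K_j\langle\mathfrak{X}\rangle$ and $\mathrm{M}_\infty(K_j)\subseteq\mathrm{M}_\infty(D)$) factors each block as $f(\mathbf{a}_j)\,g(\mathbf{b}_j)$ for a fixed pair of non-zero multilinear polynomials $f,g\in F\langle\mathfrak{X}\rangle$ with tuples $\mathbf{a}_j,\mathbf{b}_j$ in $\mathrm{M}_\infty(K_j)$. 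Because $f$ and $g$ are multilinear they commute with block-diagonalisation, so $\bigoplus_j J_{m_j}(\alpha_j,\beta_j)=f(\mathrm{diag}\,\mathbf{a}_j)\cdot g(\mathrm{diag}\,\mathbf{b}_j)$ is itself a product of two images on $\mathrm{M}_\infty(D)$. Transporting this factorisation back through the similarity $T_i\sim S_i$, using the conjugation-invariance of $f(\mathrm{M}_\infty(D))$ and $g(\mathrm{M}_\infty(D))$ separately, gives each $S_i$ as a product of two images, so the count $2+2+1=5$ concludes the proof. The main obstacle is the inseparable-eigenvalue step in paragraph two: one must locate a suitable $\beta_j$ inside a commutative subfield of $D$ containing $\alpha_j$ and avoiding $\{\alpha_j a-a\alpha_j:a\in D\}$, and this is precisely the delicate point that raises the bound from the stronger value $2$ available in the perfect-$F$ case of Proposition~\ref{Minfty}.
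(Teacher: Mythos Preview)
Your approach has two genuine gaps that prevent it from going through.

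First, Lemma~\ref{Djo} requires the matrix to be \emph{triangulate} (similar to an upper-triangular matrix), and this is not automatic for matrices over a centrally finite division ring. For instance, over the rational quaternion algebra $D=\bigl(\frac{-1,-1}{\mathbb{Q}}\bigr)$ the companion matrix of $x^{2}-2$ in $\mathrm{M}_2(D)$ has no right eigenvalue in $D$ (since $\sqrt{2}\notin D$) and is therefore not triangulable. You invoke Theorem~\ref{new1} abstractly and then assert that the resulting traceless conjugates $T_i$ can be put into the block form of Lemma~\ref{Djo}; but nothing in your argument guarantees that these $T_i$ are triangulate, and in general they need not be.

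Second, even when Lemma~\ref{Djo} does apply, in the inseparable case it only asserts the \emph{existence} of some $\beta_j\notin\{\alpha_j a-a\alpha_j:a\in D\}$ realising the similarity; it does not allow you to \emph{choose} $\beta_j$ inside a prescribed commutative subfield containing $\alpha_j$. Your sentence ``one picks $\beta_j$ in a maximal commutative subfield of $D$ containing $F(\alpha_j)$'' is precisely the unjustified step, and it is the crux of your reduction to Corollary~\ref{Co_Vitas}. (Incidentally, if your argument did work it would already give a bound of $4$ by applying Theorem~\ref{new1} to $A$ itself rather than to $\widetilde G$; the preliminary $GN$ split plays no role in your scheme.)

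The paper avoids both obstacles by a different route. Rather than trying to force an abstract semi-traceless factor over a subfield, it uses Lemma~\ref{lemma_cheohoa} to write $P^{-1}GP=XY$ with $X$ lower-unitriangular and $Y$ upper-triangular with diagonal $1,\dots,1,h$, and then inserts a carefully chosen diagonal matrix $Q$ (entries in $F\cup\{h\}$, arranged so that the relevant elements are pairwise non-conjugate) to obtain $XY=(XQ)(Q^{-1}Y)$. By \cite[Lemma~3.2]{Pa_BiDuHaSo_2022}, $XQ$ is similar to $Q$ and $Q^{-1}Y$ is similar to another diagonal matrix $Q'$, and both $Q,Q'$ lie over the single commutative subfield $F(h)$. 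Corollary~\ref{Co_Vitas} over $F(h)$ then makes each of these two factors a product of two images, giving $4$ for $G$ and $1$ for the nilpotent $N$, hence $5$ in total. The key device you are missing is this diagonal-insertion trick, which manufactures factors that are \emph{provably} similar to matrices over a subfield, rather than hoping that a Jordan-type normal form lands there.
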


\begin{proof}
Let $A\in\mathrm{M}_\infty(D)$. Then, there exists a positive integer $n$ such that $A$ can be expressed as the block matrix $$A=\begin{pmatrix}
		A'&0\\0&0
	\end{pmatrix},$$ where $A'\in\mathrm{M}_n(D)$. The first of this proof is similar to the proof of Theorem~\ref{image1}. Invoking \cite[Corollary 7]{Pa_Ca_16}, there exist $G\in\mathrm{GL}_n(D)$ and $N$ is nilpotent in $\mathrm{M}_n(D)$ such that $A$ can be expressed as the product of $G$ and $N$, writing $A=GN$. Then, $\begin{pmatrix}
	N&0\\0&0
	\end{pmatrix}$ is a image of non-zero multilinear polynomials in $F\langle\mathfrak{X}\rangle$ evaluated on $\mathrm{M}_\infty(D)$. If $G$ is central, then $$\begin{pmatrix}
		G&0\\0&0
	\end{pmatrix}$$ is a product of two images of non-zero multilinear polynomials in $F\langle\mathfrak{X}\rangle$ evaluated on $\mathrm{M}_\infty(D)$. Now, we consider $G$ is non-central.	According to Lemma~\ref{lemma_cheohoa}, there exists $P\in \mathrm{GL}_n(D)$ such that $P^{-1}GP$ has the form $$P^{-1}GP=XY$$ where $X$ is a lower triangular matrix with the main diagonal entries are $1$ and $Y$ is an upper triangular matrix with the main diagonal entries are  $1,1,\dots,1,h$ for some $h\in D\setminus\{0\}$. If $h=1$, then by using the similar argument in the proof of Theorem~\ref{image1}, both $X$ and $Y$ are products of two images of non-zero multilinear polynomials in $F\langle\mathfrak{X}\rangle$ evaluated on $\mathrm{M}_\infty(D)$. Now, we consider $h\neq1$. By increasing $n$ if necessary, we can assume $n$ is even. Since $D$ is a non-commutative division ring which is finite-dimensional over $F$, the center $F$ of $D$ is infinite. If $h$ and $h^{-1}$ are conjugate, then by using \cite[Lemma 2.3]{Pa_BiDuHa_22},  one can choose $\alpha\in F\setminus\{0\}$ such that $(\alpha h)$ and $(\alpha h)^{-1}$ are non-conjugate, so without loss of generality, we can assume $h$ and $h^{-1}$ are non-conjugate. On the other hand, by putting $t=\frac{n-2}{2}$ and using \cite[Lemma 2.3]{Pa_BiDuHa_22} again, since $F$ is infinite, we can choose $x_1,x_1^{-1},\ldots,x_t,x_t^{-1}\in F\setminus\{-1,0,1\}$ such that $$x_1,x_1^{-1},\ldots,x_t,x_t^{-1},h,h^{-1},1$$  are pairwise non-conjugate. Let $Q$ be the diagonal matrix with entries on the main diagonal are $$x_1,x_1^{-1},\ldots,x_t,x_t^{-1},h,1.$$ By using \cite[Lemma 3.2]{Pa_BiDuHaSo_2022}, the matrix $XQ$ is similar to $Q$ and the matrix $Q^{-1}Y$ is similar to the diagonal matrix $Q'$ with entries on the main diagonal are $$x_1^{-1},x_1,\ldots,x_t^{-1},x_t,h^{-1},h.$$ Note that both $Q$ and $Q'$ are matrices over the field $F(h)$ generated by $h$ over $F$. Therefore, both block matrices $\begin{pmatrix}
	X&0\\0&0
	\end{pmatrix}$ and $\begin{pmatrix}
	Y&0\\0&0
	\end{pmatrix}$ are products of two images of non-zero multilinear polynomials in $F\langle\mathfrak{X}\rangle$ evaluated on $\mathrm{M}_\infty(D)$. Consequently, $A$ is a product of at most five images of non-zero multilinear polynomials in $F\langle\mathfrak{X}\rangle$ evaluated on $\mathrm{M}_\infty(D)$, as stated.
\end{proof}

\section{Images of non-commutative polynomials}	

Continuing the previous section, in this section, we will firstly study products of images of multilinear polynomials and then products of images of non-commutative polynomials with zero constant terms. Throughout this section, we use all denotations in the preceding section.

\medskip

First, we will begin with

\begin{proposition}\label{H}
Let $\mathbb{H}$ be the real quaternion division ring with $i,j,k$ satisfying $i^2=j^2=k^2=-1$ and $ij=-ji=k$. Then, every element in $\mathbb{H}$ is a product of two images of non-zero and non-central multilinear polynomials in $\mathbb{R}\langle\mathfrak{X}\rangle$ evaluated on $\mathbb{H}$.
\end{proposition}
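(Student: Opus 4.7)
The approach is to exhibit a single non-zero, non-central multilinear polynomial whose image on $\mathbb{H}$ is the whole space of pure quaternions, and then to factor every quaternion as a product of two pure quaternions. Concretely, I would take the commutator polynomial
\[
f(\mathbf{x}_1,\mathbf{x}_2) = \mathbf{x}_1\mathbf{x}_2 - \mathbf{x}_2\mathbf{x}_1 \in \mathbb{R}\langle\mathfrak{X}\rangle
\]
in both slots.

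First I would compute $f(\mathbb{H})$ explicitly. Write quaternions in the form $x = x_0 + \mathbf{x}$ with $x_0 \in \mathbb{R}$ and $\mathbf{x}$ in the pure part $V := \mathbb{R}i \oplus \mathbb{R}j \oplus \mathbb{R}k$, which I identify with $\mathbb{R}^3$. Using the standard identity $\mathbf{x}\mathbf{y} = -(\mathbf{x}\cdot\mathbf{y}) + \mathbf{x}\times\mathbf{y}$ for pure quaternions, a direct calculation gives $[x,y] = 2(\mathbf{x}\times\mathbf{y})$, since the real and mixed contributions cancel. As the cross product is surjective onto $\mathbb{R}^3$, this shows $f(\mathbb{H}) = V$. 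In particular, $f$ is non-zero (for instance, $[i,j] = 2k \neq 0$) and non-central (because $V \not\subseteq \mathbb{R}$).

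Next I would prove that $V \cdot V = \mathbb{H}$. Given $q = q_0 + \mathbf{q} \in \mathbb{H}$ with $q \neq 0$, I would select a non-zero element $a \in V$ with $\mathbf{a}\cdot\mathbf{q} = 0$; this is always possible because the orthogonal complement of $\mathbf{q}$ in $\mathbb{R}^3$ has dimension at least two (the full three if $\mathbf{q} = 0$). Since $a$ is a non-zero pure quaternion, $a^{-1} = -a/|a|^2 \in V$. The orthogonality condition then forces
\[
aq = q_0 a + \mathbf{a}\mathbf{q} = q_0 a + \mathbf{a}\times\mathbf{q} \in V,
\]
so $b := a^{-1}q = -(aq)/|a|^2$ also lies in $V$, and $ab = q$. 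The degenerate case $q = 0$ is handled by writing $0 = i\cdot 0$ with both factors in $V$. Combining the two steps yields $q = f(x_1,y_1)\cdot f(x_2,y_2)$ for suitable $x_i,y_i \in \mathbb{H}$, which is the required decomposition.

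The main obstacle is this factorization $\mathbb{H} = V\cdot V$, and within it the delicate point is the uniform treatment of real $q$: the geometric condition $\mathbf{a}\perp\mathbf{q}$ is exactly what is needed to kill the scalar part of $\mathbf{a}\mathbf{q}$ and keep $a^{-1}q$ pure. Everything else is a routine verification using the quaternion multiplication rules.
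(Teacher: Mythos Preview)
Your factorization $V\cdot V=\mathbb{H}$ is correct and is essentially the same geometric idea the paper uses: the paper first conjugates an arbitrary $\alpha$ into the form $x+\sqrt{y^2+z^2+t^2}\,i$ and then writes $x+ci=j\cdot(-xj+ck)$, which is just a particular instance of your orthogonality trick.

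The gap is in the interpretation of the statement. As the paper's own proof makes clear (it invokes \cite[Theorem~6]{Bo_Al_16}, a L'vov--Kaplansky-type result for $\mathbb{H}$), the assertion is meant \emph{universally} in the polynomial: for \emph{every} non-zero, non-central multilinear $f\in\mathbb{R}\langle\mathfrak{X}\rangle$, each $\alpha\in\mathbb{H}$ lies in $f(\mathbb{H})\cdot f(\mathbb{H})$. You only treat the single polynomial $f(\mathbf{x}_1,\mathbf{x}_2)=\mathbf{x}_1\mathbf{x}_2-\mathbf{x}_2\mathbf{x}_1$, so what you have actually proved is the existential version. To get the full statement you still need the input that the image of \emph{any} non-zero non-central multilinear polynomial on $\mathbb{H}$ contains the pure quaternions $V$ (equivalently, that such an image is either $V$ or all of $\mathbb{H}$); once that is in hand, your factorization $V\cdot V=\mathbb{H}$ finishes the job exactly as in the paper. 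Your computation $[\,x,y\,]=2(\mathbf{x}\times\mathbf{y})$ only establishes this for the commutator and does not extend to an arbitrary multilinear $f$.
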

	
\begin{proof}
Set $\alpha\in\mathbb{H}$ and write $\alpha=x+yi+zj+tk$ for some $x,y,z,t\in\mathbb{R}$. By using \cite[Lemma 2.1]{Pa_Zha_97}, there exists $p\in\mathbb{H}\setminus\{0\}$ such that $$p^{-1}\alpha p=x+\sqrt{y^2+z^2+t^2}i.$$ On the other hand, $$x+\sqrt{y^2+z^2+t^2}i=j(-xj+\sqrt{y^2+z^2+t^2}k).$$ According to \cite[Theorem 6, Page 12]{Bo_Al_16}, the elements $j$ and $-xj+\sqrt{y^2+z^2+t^2}k$ are images of non-zero and non-central multilinear polynomials in $\mathbb{R}\langle\mathfrak{X}\rangle$ evaluated on $\mathbb{H}$. Therefore, $\alpha$ is a product of two images of non-zero and non-central multilinear polynomials in $\mathbb{R}\langle\mathfrak{X}\rangle$ evaluated on $\mathbb{H}$, as required.
\end{proof}	

Similarly to Proposition~\ref{Minfty}, we obtain an analogous version of the assertion for finite matrices over the real quaternion division ring, which states the following.

\begin{corollary}
Let $\mathbb{H}$ be the real quaternion division ring with $i,j,k$ satisfying $i^2=j^2=k^2=-1$ and $ij=-ji=k$ and $n\geq 1$ a positive integer. Then, every element in $\mathrm{M}_n(\mathbb{H})$ is a product of four images of non-zero and non-central multilinear polynomials in $\mathbb{R}\langle\mathfrak{X}\rangle$ evaluated on $\mathrm{M}_n(\mathbb{H})$.
\end{corollary}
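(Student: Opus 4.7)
The plan is to mirror the proof of Proposition~\ref{Minfty} in the finite-dimensional quaternionic setting, combining Proposition~\ref{semi-2} with a commutator/image argument in the spirit of Proposition~\ref{H}. For $n = 1$, one has $\mathrm{M}_1(\mathbb{H}) = \mathbb{H}$, so Proposition~\ref{H} already gives the decomposition into two images, hence certainly into four. So assume $n \geq 2$. Since the center $\mathbb{R}$ of $\mathbb{H}$ is perfect, Proposition~\ref{semi-2} applies and produces $A = S_1 S_2$ with $S_1, S_2 \in \mathrm{M}_n(\mathbb{H})$ semi-traceless. Because the image of any multilinear polynomial is invariant under conjugation, and each $S_i$ is by definition similar to a traceless matrix, it suffices to show that every traceless matrix in $\mathrm{M}_n(\mathbb{H})$ is a product of at most two images of non-zero, non-central multilinear polynomials in $\mathbb{R}\langle \mathfrak{X} \rangle$ evaluated on $\mathrm{M}_n(\mathbb{H})$; combined with $A = S_1 S_2$, this yields the claimed bound of four.

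For the key step, I would inspect the proof of Proposition~\ref{semi-2}: up to a global conjugation, $A$ admits a representative $A' \in \mathrm{M}_n(K)$ for some subfield $K \subseteq \mathbb{H}$ containing $\mathbb{R}$, and Theorem~\ref{field} then writes $A' = T_1 T_2$ with $T_1, T_2 \in \mathrm{M}_n(K)$ traceless. By the classical Shoda--Albert--Muckenhoupt theorem, every traceless matrix over a field (for $n \geq 2$) is a commutator, so $T_i = [X_i, Y_i]$ for some $X_i, Y_i \in \mathrm{M}_n(K) \subseteq \mathrm{M}_n(\mathbb{H})$. The Lie polynomial $[\mathbf{x}_1, \mathbf{x}_2] = \mathbf{x}_1 \mathbf{x}_2 - \mathbf{x}_2 \mathbf{x}_1 \in \mathbb{R}\langle \mathfrak{X} \rangle$ is non-zero and multilinear, and it is non-central because its image on $\mathrm{M}_n(\mathbb{H})$ contains the non-scalar matrix $E_{11} - E_{22} = [E_{12}, E_{21}]$. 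Hence each $T_i$---and therefore each $S_i$, by conjugation invariance of images---lies in the image of $[\mathbf{x}_1, \mathbf{x}_2]$, and $A = S_1 S_2$ is realized as a product of two such images, well within the bound of four.

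The main obstacle is largely conceptual: the Shoda-based shortcut actually delivers the sharper bound of two, so four is comfortably met but not tight. If one wishes to stay closer to the element-level argument of Proposition~\ref{H}---avoiding the commutator theorem and instead mimicking the quaternion identity $x + yi = j(-xj + yk)$---then after a cyclic-type similarity reducing each $S_i$ to a direct sum of companion blocks (as in the proof of Theorem~\ref{new1}) one applies a block-wise lift of Proposition~\ref{H}, obtaining two images per semi-traceless factor and exactly four in total, faithfully reproducing the quaternion-flavored structure of the stated bound.
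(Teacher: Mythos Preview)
Your argument establishes that every $A \in \mathrm{M}_n(\mathbb{H})$ is a product of two values of the \emph{specific} polynomial $[\mathbf{x}_1,\mathbf{x}_2]$, but the corollary---read in line with Proposition~\ref{H} and the way \cite{Bo_Al_16} is invoked there---asserts more: for an \emph{arbitrary} non-zero non-central multilinear $f \in \mathbb{R}\langle\mathfrak{X}\rangle$, every matrix is a product of four elements of $f(\mathrm{M}_n(\mathbb{H}))$. The Shoda--Albert--Muckenhoupt step only produces commutators, not values of a general $f$; to close the gap you would need a statement of the form ``every traceless matrix over $K$ lies in $f(\mathrm{M}_n(\mathbb{H}))$ for each such $f$'', which is a Kaplansky/L'vov--type claim that is not available for finite $n$ and which you have not supplied.

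Your fallback route---reduce each semi-traceless factor to a direct sum of companion blocks and lift Proposition~\ref{H} block-wise---has the same defect: Proposition~\ref{H} treats $1\times 1$ blocks (single quaternions), not companion blocks of size $\ge 2$, and there is no mechanism for transporting the quaternion identity $x+yi=j(-xj+yk)$ to a whole companion block. The paper avoids this entirely by a different decomposition: via \cite[Theorem~5.5.3]{Bo_Ro_14} every $A$ is similar over $\mathbb{H}$ to a complex matrix, and Botha's theorem \cite{Pa_Bo_98} then factors it as $A=BC$ with $B,C$ \emph{diagonalizable}. Each factor is thus similar to a diagonal matrix, and now Proposition~\ref{H} applies entry by entry (multilinear polynomials evaluated on diagonal matrices act coordinatewise), writing each of $B,C$ as a product of two values of the given $f$. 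The ingredient missing from your proposal is precisely this passage through diagonalizable factors, which is what makes the entrywise lift of Proposition~\ref{H} legitimate.
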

	
\begin{proof}
Put $A\in\mathrm{M}_n(\mathbb{H})$. Owing to \cite[Theorem 5.5.3, Page 98]{Bo_Ro_14}, the matrix $A$ is similar to a matrix over the field of complex numbers. According to \cite[Theorem 2.1]{Pa_Bo_98}, $A$ is a product of two diagonalizable matrices, writing $A=BC$. On the other side, invoking Proposition~\ref{H}, any diagonal matrix is a product of two images of non-zero and non-central multilinear polynomials in $\mathbb{R}\langle\mathfrak{X}\rangle$ evaluated on $\mathrm{M}_n(\mathbb{H})$, so are $B$ and $C$. Consequently, $A$ is a product of four images of non-zero and non-central multilinear polynomials in $\mathbb{R}\langle\mathfrak{X}\rangle$ evaluated on $\mathrm{M}_n(\mathbb{H})$, as needed.
\end{proof}

The next two propositions are pivotal for our further results.

\begin{proposition}\label{field2}
Let $F$ be a field. Then, every element in $\mathrm{M}_2(F)$ is a product of two  images of non-zero and non-central multilinear polynomials in $F\langle\mathfrak{X}\rangle$ evaluated on $\mathrm{M}_2(F)$.
\end{proposition}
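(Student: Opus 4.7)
The plan is to combine Theorem~\ref{field} (applied with $D=F$ and $n=2$) with the classical Shoda--Albert--Muckenhoupt theorem that every traceless matrix over an arbitrary field is a single additive commutator. Concretely, given $A\in\mathrm{M}_2(F)$, I would first invoke Theorem~\ref{field} to write
\[ A = T_1 T_2, \qquad T_1, T_2 \in \mathrm{sl}_2(F). \]

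Next, I would consider the multilinear polynomial $f(\mathbf{x}_1,\mathbf{x}_2)=\mathbf{x}_1\mathbf{x}_2-\mathbf{x}_2\mathbf{x}_1\in F\langle\mathfrak{X}\rangle$. This polynomial is obviously non-zero, and it is non-central because its image on $\mathrm{M}_2(F)$ contains, for example, the non-central matrix $[E_{11},E_{12}]=E_{12}$. By the Albert--Muckenhoupt theorem (valid for \emph{any} field $F$, including those of characteristic two and small fields, so that no extra hypotheses are imposed), every traceless matrix in $\mathrm{M}_2(F)$ can be written as $XY-YX$ for some $X,Y\in\mathrm{M}_2(F)$; equivalently, $\mathrm{sl}_2(F) \subseteq f(\mathrm{M}_2(F))$. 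In particular each $T_i$ lies in the image of $f$.

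Putting the two steps together yields $A=T_1T_2$ with each $T_i$ an image of the same non-zero non-central multilinear polynomial $f$, which is what is required. The only point that might look like an obstacle is whether the commutator representation of trace-zero matrices is available for all fields (one could worry about small or positive-characteristic fields), but this is exactly what Albert--Muckenhoupt provides, so citing that classical result disposes of it. No additional argument about the $n=2$ case is needed, since Theorem~\ref{field} and the commutator theorem both cover $n=2$ directly.
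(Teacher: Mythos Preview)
Your argument follows exactly the same two-step structure as the paper's proof: first invoke Theorem~\ref{field} to write $A=T_1T_2$ with $T_1,T_2\in\mathrm{sl}_2(F)$, then argue that each traceless factor lies in the image of a non-zero non-central multilinear polynomial. The difference is in the second step. You pick the \emph{specific} polynomial $f(\mathbf{x}_1,\mathbf{x}_2)=\mathbf{x}_1\mathbf{x}_2-\mathbf{x}_2\mathbf{x}_1$ and appeal to Albert--Muckenhoupt to realise every $T\in\mathrm{sl}_2(F)$ as a commutator. The paper instead cites Malev's theorem \cite[Theorem~1]{Pa_Ma_14}, which gives the much stronger statement that \emph{every} non-zero non-central multilinear polynomial has image on $\mathrm{M}_2(F)$ containing all of $\mathrm{sl}_2(F)$.

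This matters because the proposition, as it is used throughout the paper (compare Theorems~\ref{image1} and~\ref{image2}, Proposition~\ref{Vitas}, Corollary~\ref{Co_Vitas}), is meant in the universal sense: for \emph{any} fixed non-zero non-central multilinear $f\in F\langle\mathfrak{X}\rangle$, every matrix in $\mathrm{M}_2(F)$ is a product of two elements of $f(\mathrm{M}_2(F))$. Your proof establishes this only for the single polynomial $f=[\mathbf{x}_1,\mathbf{x}_2]$, so under the intended reading it is incomplete. If you read the statement existentially (``there exists some $f$''), then your argument is correct and has the virtue of relying only on the classical Albert--Muckenhoupt theorem rather than the more recent and deeper result of Malev; but that weaker reading is not what the paper needs downstream.
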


\begin{proof}
Let $A\in\mathrm{M}_2(F)$. Thus, Theorem~\ref{field} tells us that $A$ is a product of two traceless matrices in $\mathrm{M}_2(F)$. Consulting with \cite[Theorem 1]{Pa_Ma_14}, any traceless matrix is a image of non-zero and non-central multilinear polynomials in $F\langle\mathfrak{X}\rangle$ evaluated on $\mathrm{M}_2(F)$. Therefore, $A$ is a product of two  images of non-zero and non-central multilinear polynomials in $F\langle\mathfrak{X}\rangle$ evaluated on $\mathrm{M}_2(F)$, as required.
\end{proof}

\begin{proposition}
Let $D$ be a division ring with center $F$. Then, every element in $\mathrm{M}_2(D)$ is a product of seven images of non-zero and non-central multilinear polynomials in $F\langle\mathfrak{X}\rangle$ evaluated on $\mathrm{M}_2(D)$. In particular, if $D$ is finite dimensional over $F$, then every element in $\mathrm{M}_2(D)$ is a product of five  images of non-zero and non-central multilinear polynomials in $F\langle\mathfrak{X}\rangle$ evaluated on $\mathrm{M}_2(D)$.
\end{proposition}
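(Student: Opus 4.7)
The strategy is to mirror the proofs of Theorems~\ref{image1} and \ref{image2}, specialising to the case $n=2$. Given $A\in\mathrm{M}_2(D)$, apply \cite[Corollary 7]{Pa_Ca_16} to factor $A=GN$ with $G\in\mathrm{GL}_2(D)$ and $N\in\mathrm{M}_2(D)$ nilpotent. By \cite[Lemma 3.2]{Pa_AbLe_21}, the nilpotent factor $N$ is similar to a strictly upper-triangular matrix over $F$, and \cite[Theorem 1]{Pa_Ma_14} then realises $N$ as a single image of a non-zero and non-central multilinear polynomial in $F\langle\mathfrak{X}\rangle$. Thus it suffices to show that $G$ accounts for at most six (respectively four) such images.

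If $G$ is central, then $G=\lambda \mathrm{I}_2\in\mathrm{M}_2(F)$, and Proposition~\ref{field2} already expresses $G$ as a product of two images, well within both bounds. Otherwise, Lemma~\ref{lemma_cheohoa} provides $P\in\mathrm{GL}_2(D)$ with $P^{-1}GP=XHY$, where $X$ and $Y$ are unipotent (lower- and upper-triangular respectively) and $H=\mathrm{diag}(1,h)$ for some $h\in D\setminus\{0\}$. By Proposition~\ref{UTLT}, each of $X$ and $Y$ is similar to a matrix over $F$, while $H$ lies in $\mathrm{M}_2(F(h))$. Applying Proposition~\ref{field2} to the fields $F$ and $F(h)$, and then transferring the resulting multilinear-image factorisations to polynomials with coefficients in $F$ evaluated on $\mathrm{M}_2(D)$ by the same device used in Theorems~\ref{image1} and \ref{image2}, each of $X$, $H$, $Y$ contributes two factors. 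Together with $N$, this yields the total $2+2+2+1=7$.

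For the finite-dimensional case, fold $H$ into $Y$ and write $G\sim XY'$ with $X$ lower unipotent and $Y'$ upper triangular of diagonal $(1,h)$, as in the proof of Theorem~\ref{image2}. If $h=1$ then $Y'$ is unipotent, and Propositions~\ref{UTLT} and~\ref{field2} express $X$ and $Y'$ as products of two images apiece, giving $2+2+1=5$. If $h\ne 1$, insert $Q=\mathrm{diag}(h,1)$ and split $G\sim(XQ)(Q^{-1}Y')$. Since $D$ is finite-dimensional and non-commutative over $F$, the centre $F$ is infinite, so \cite[Lemma 2.3]{Pa_BiDuHa_22} permits replacing $h$ by $\alpha h$ for a suitable $\alpha\in F\setminus\{0\}$ to ensure that $h$ and $h^{-1}$ are non-conjugate in $D$. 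Then \cite[Lemma 3.2]{Pa_BiDuHaSo_2022} gives the similarities $XQ\sim Q$ and $Q^{-1}Y'\sim\mathrm{diag}(h^{-1},h)$, both matrices lying in $\mathrm{M}_2(F(h))$. Proposition~\ref{field2} over $F(h)$ again furnishes two images each, for a final count of $2+2+1=5$.

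The principal technical hurdle is, just as in Theorems~\ref{image1} and \ref{image2}, the passage from multilinear polynomials with coefficients in the intermediate field $F(h)$ to multilinear polynomials with coefficients in the prescribed centre $F$ when the evaluation is carried out on $\mathrm{M}_2(D)$; I expect to handle this by the same embedding/scalar-absorption trick already deployed there. Once that step is secured, all remaining work is a direct application of Lemma~\ref{lemma_cheohoa}, Proposition~\ref{UTLT}, and Proposition~\ref{field2}, and the numerical bounds $7$ and $5$ follow from the bookkeeping above.
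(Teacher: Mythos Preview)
Your proposal is correct and follows essentially the same approach as the paper's own proof: factor $A=GN$ via \cite[Corollary~7]{Pa_Ca_16}, handle $N$ through \cite[Lemma~3.2]{Pa_AbLe_21} and Malev's theorem, reduce a non-central $G$ via Lemma~\ref{lemma_cheohoa} to $XHY$ (respectively $XY'$ in the finite-dimensional case, with the same $Q=\mathrm{diag}(h,1)$ insertion and the similarity lemmas from \cite{Pa_BiDuHa_22,Pa_BiDuHaSo_2022}), and then invoke Proposition~\ref{field2} over $F$ and $F(h)$ for each factor. The ``technical hurdle'' you flag about passing from $F(h)$-coefficients to $F$-coefficients is not treated as a separate step in the paper either---since a non-zero non-central multilinear $f\in F\langle\mathfrak{X}\rangle$ remains such when viewed in $F(h)\langle\mathfrak{X}\rangle$, Malev's result applied to $\mathrm{M}_2(F(h))\subseteq\mathrm{M}_2(D)$ already places the traceless factors inside $f(\mathrm{M}_2(D))$, so no additional device is required.
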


\begin{proof}
The first part of the proof uses a similar argument as in the proof of Theorem~\ref{image1}. In fact, let $A\in\mathrm{M}_2(D)$. Using \cite[Corollary 7]{Pa_Ca_16}, there exist $G\in\mathrm{GL}_2(D)$ and a nilpotent $N$ in $\mathrm{M}_2(D)$ such that $A$ can be expressed as the product of $G$ and $N$, writing $A=GN$. If $G$ is central, then in view of \cite[$\S21$, Theorem 1, Page 140]{Bo_Dra_83}, there exists $\lambda\in F$ such that $G=\lambda\mathrm{I}_2$. An application of Proposition~\ref{field2} insures that $G$ is a product of two images of non-zero and non-central multilinear polynomials in $F\langle\mathfrak{X}\rangle$ evaluated on $\mathrm{M}_2(D)$. Moreover, in accordance with Lemma~\ref{lemma_cheohoa}, there exists $P\in \mathrm{GL}_n(D)$ such that $P^{-1}GP$ has the form $$P^{-1}GP=XHY,$$ where $X$ is a lower triangular matrix such that the main diagonal entries are $1$, and $Y$ is an upper triangular matrix with the main diagonal entries $1$, and $H$ is the diagonal matrix with the main diagonal entries $1,h$ for some $h\in D\setminus\{0\}$. Exploiting Lemma~\ref{UTLT}, both $X$ and $Y$ are similar to matrices over $F$. On the other side, $H$ is a matrix over the subfield $F(h)$ generated by $h$ over $F$. Furthermore, employing \cite[Lemma 3.2]{Pa_AbLe_21}, $N$ is similar to a traceless matrix over $F$, so $N$ is a image of a non-zero and non-central multilinear polynomial in $F\langle\mathfrak{X}\rangle$ evaluated on $\mathrm{M}_2(D)$ in view of
\cite[Theorem 1]{Pa_Ma_14}. According to Proposition~\ref{field2}, the matrices $X,Y$ and $H$ are products of two images of non-zero and non-central multilinear polynomials in $F\langle\mathfrak{X}\rangle$ evaluated on $\mathrm{M}_2(D)$. Therefore, $A$ is a product of seven images of non-zero and non-central multilinear polynomials in $F\langle\mathfrak{X}\rangle$ evaluated on $\mathrm{M}_2(D)$, as required.

Concerning the second part, in case that $D$ is finite dimensional over $F$, the proof is completed by the usage of an analogous argument as that from the proof of Theorem~\ref{image2}.
\end{proof}

It is well-known in \cite{Pa_DuSo} that every matrix over an arbitrary field is a product of two additive commutators by noticing that the arguments are similar to Theorem~\ref{field} and knowing the fact that every traceless matrix over a field is an additive commutator. In virtue of Theorem~\ref{field}, Theorem~\ref{semi-trace} and \cite[Theorem 2.4]{Pa_AmRo_94}, we extract the following surprising result.

\begin{proposition}\label{commutator}
Each matrix over a division ring is a product of at most four additive commutators.
\end{proposition}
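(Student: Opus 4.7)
The plan is to split on whether $D$ is a field or a genuinely non-commutative division ring, and in each case combine the product decompositions already proved with an existing characterization of additive commutators in $\mathrm{M}_n(D)$. Recall that for a non-commutative $D$ the paper introduced semi-traceless matrices precisely because similar matrices need not have the same trace; it is this \emph{semi-traceless} notion (not traceless) that matches the notion of ``additive commutator'' in the division-ring setting, via the cited result \cite[Theorem 2.4]{Pa_AmRo_94}, which we read as: a matrix in $\mathrm{M}_n(D)$ is an additive commutator $XY-YX$ if and only if it is semi-traceless.

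Case 1: $D$ is a field. By Theorem~\ref{field}, any $A\in\mathrm{M}_n(D)$ is a product of two traceless matrices. Over a field the classical Shoda-type theorem (this is exactly the observation the paper attributes to \cite{Pa_DuSo} immediately before the statement) says that every traceless matrix in $\mathrm{M}_n(D)$ is of the form $[X,Y]=XY-YX$. Hence $A$ is a product of two additive commutators, in particular a product of at most four.

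Case 2: $D$ is a non-commutative division ring. Theorem~\ref{semi-trace} gives $A=S_1S_2S_3S_4$ with each $S_i$ semi-traceless. Applying \cite[Theorem 2.4]{Pa_AmRo_94} to each factor, each $S_i$ is an additive commutator, so $A$ is a product of at most four additive commutators. (Conjugation-invariance is implicit: if $T=P^{-1}T'P$ with $T'$ traceless, and $T'=[X,Y]$, then $T=[P^{-1}XP,P^{-1}YP]$, so similarity preserves the property of being an additive commutator; this is the point that lets one pass from ``traceless $\Rightarrow$ commutator'' in the field case to ``semi-traceless $\Rightarrow$ commutator'' in the division-ring case.)

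Combining both cases yields the uniform bound of four. The only nontrivial input is the equivalence between semi-tracelessness and being an additive commutator in $\mathrm{M}_n(D)$, so the main obstacle is simply making sure \cite[Theorem 2.4]{Pa_AmRo_94} is invoked in the right form; everything else is a direct assembly of Theorem~\ref{field} and Theorem~\ref{semi-trace}, and the bound $4$ dominates the $2$ obtained in the commutative case.
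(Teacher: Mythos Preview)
Your proof is correct and follows exactly the paper's own justification: the paper simply invokes Theorem~\ref{field}, Theorem~\ref{semi-trace}, and \cite[Theorem~2.4]{Pa_AmRo_94}, and you have fleshed out precisely this combination, including the necessary case split (since Theorem~\ref{semi-trace} requires $D$ non-commutative). Your remark that conjugation preserves the property of being an additive commutator is the key observation linking ``semi-traceless'' to ``commutator'' via the Amitsur--Rowen result, and is exactly what the paper's citation of \cite{Pa_AmRo_94} is meant to supply.
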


Next, we consider generalized commutators in matrix rings. Recall that any element of form $abc-cba$ in a ring with unity, containing the elements $a,b,c$, is called a \textit{generalized commutator} (see, e.g., \cite{KL} or \cite{Pa_DaLe_22}, respectively). It is clear that every additive commutator is a generalized commutator. Therefore, we have immediately from Proposition~\ref{commutator} the following interesting and non-trivial consequence.

\begin{corollary}\label{gener}
Each matrix over a division ring is a product of at most four generalized commutators.
\end{corollary}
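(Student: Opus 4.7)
The statement is an immediate consequence of Proposition~\ref{commutator} together with the observation (already recorded in the paragraph preceding the corollary) that every additive commutator is a generalized commutator. So the plan is very short: I would simply combine these two facts.

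First, I would invoke Proposition~\ref{commutator} to write an arbitrary matrix $A \in \mathrm{M}_n(D)$ as a product $A = X_1 X_2 X_3 X_4$, where each $X_i$ is an additive commutator in $\mathrm{M}_n(D)$ (allowing trivial factors $X_i = I = [I,I] - [I,I] \cdot 0$ style fillers if fewer than four commutators actually appear, which is harmless).

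Next, for each factor $X_i = a_i b_i - b_i a_i$, I would rewrite it as the generalized commutator
\[
X_i \;=\; a_i b_i \cdot I \;-\; I \cdot b_i a_i,
\]
where $I$ denotes the identity matrix in $\mathrm{M}_n(D)$. This is exactly the defining form $abc - cba$ of a generalized commutator with $c = I$.

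Hence $A$ is expressed as a product of at most four generalized commutators. There is no genuine obstacle here; the only thing to watch is that the ring $\mathrm{M}_n(D)$ has a multiplicative identity, which is required in the definition of a generalized commutator, and this is obviously satisfied.
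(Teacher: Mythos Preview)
Your argument is correct and follows exactly the paper's own reasoning: invoke Proposition~\ref{commutator} and then use the already-noted fact that every additive commutator $ab-ba$ is a generalized commutator (indeed $ab\cdot I - I\cdot ba$). Your parenthetical about padding with ``trivial factors $X_i = I$'' is both unnecessary (the statement says \emph{at most} four) and slightly off (the identity matrix is not an additive commutator in general, since commutators have trace zero over a field), so just drop that aside.
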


We are now planning to show the validity of the following main assertion.

\begin{theorem}\label{main3}
Let $R$ be a finite-dimensional algebra over a field $F$ of characteristic $0$ which has no direct summands that are fields in the Wedderburn decomposition. Then, every element in $R$ is a product of at most four generalized commutators in $R$.
\end{theorem}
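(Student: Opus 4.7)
The plan is to reduce the theorem to the semisimple quotient of $R$ via the Wedderburn-Malcev principal theorem, handle each simple factor using Corollary~\ref{gener} together with an auxiliary result for non-commutative division rings, and then transport the factorization back up to $R$ using the nilpotence of the Jacobson radical.

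Since $F$ has characteristic $0$, the quotient $R/J(R)$ is separable as an $F$-algebra, so the Wedderburn-Malcev principal theorem yields an $F$-vector-space decomposition $R = S \oplus J(R)$ in which $S$ is a semisimple $F$-subalgebra of $R$ isomorphic to $R/J(R)$ and $J(R)$ is a nilpotent ideal. By Wedderburn-Artin one writes $S \cong \prod_{i=1}^{k} \mathrm{M}_{n_i}(D_i)$, and the ``no field summand'' hypothesis forces that for every $i$ either $n_i \ge 2$ or $D_i$ is a non-commutative division ring. For factors with $n_i \ge 2$, Corollary~\ref{gener} directly supplies a product representation of length at most four. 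For the remaining factors with $n_i = 1$, one invokes the analogous statement for a non-commutative division ring $D_i$ finite-dimensional over its center, namely that every element of $D_i$ is a product of a bounded (in fact, small) number of additive and hence generalized commutators in $D_i$; this is available through the structure theory of central simple algebras in characteristic zero. After assembling coordinatewise and re-balancing the lengths so that every component uses exactly four factors, every element of $S$ is expressible as a product of at most four generalized commutators in $S$, and therefore in $R$.

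Finally, to promote a factorization from $S$ to all of $R$, I would take $r = s + j$ with $s \in S$ and $j \in J(R)$, fix a representation $s = \gamma_1 \gamma_2 \gamma_3 \gamma_4$ with each $\gamma_i$ a generalized commutator in $R$ coming from $S$, and perturb the inner slot of a suitably chosen $\gamma_i = a_i b_i c_i - c_i b_i a_i$ by an element of $J(R)$: replacing $c_i$ by $c_i + \varepsilon$ yields another generalized commutator differing from $\gamma_i$ by $a_i b_i \varepsilon - \varepsilon b_i a_i$, so the correction reduces to a Sylvester-type equation $a_i b_i \varepsilon - \varepsilon b_i a_i = \eta$ solvable in $J(R)$. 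Because $J(R)$ is nilpotent, iterating through the filtration $J(R) \supseteq J(R)^2 \supseteq \cdots$ terminates in finitely many steps. The main obstacle, and the step in which the ``no field summand'' hypothesis plays its decisive role, is guaranteeing that the $\gamma_i$ can be chosen so that these Sylvester equations are always solvable: a field direct summand would produce a quotient $R \twoheadrightarrow F$ annihilating every generalized commutator and hence obstructing any radical correction, whereas its absence secures a sufficiently rich supply of commutators to complete the induction without exceeding four factors.
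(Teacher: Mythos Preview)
Your treatment of the semisimple subalgebra $S$ is essentially the paper's entire proof. The paper reads the hypothesis ``has no field summands in the Wedderburn decomposition'' as saying that $R$ itself is semisimple, writes $R\cong\prod_{i}\mathrm{M}_{n_i}(D_i)$ directly, and then argues factor by factor exactly as you do: for $n_i\ge 2$ it quotes Corollary~\ref{gener}, and for $n_i=1$ (so $D_i$ is a non-commutative division ring) it invokes the concrete result \cite[Corollary~3.8]{Pa_Gup_09} that every element of $D_i$ is a single generalized commutator. Your vaguer appeal to ``the structure theory of central simple algebras'' points in the right direction but should be replaced by that citation.

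The extra layer you add --- descending to $S$ via Wedderburn--Malcev and then ``promoting'' the factorization back to $R=S\oplus J(R)$ --- is not something the paper attempts, and in your sketch it carries a genuine gap. The correction you need is the solvability, for $\varepsilon\in J(R)$, of a Sylvester-type equation $a_ib_i\varepsilon-\varepsilon b_ia_i=\eta$ with prescribed $\eta\in J(R)$. There is no reason for the $F$-linear map $\varepsilon\mapsto a_ib_i\varepsilon-\varepsilon b_ia_i$ on $J(R)$ to be surjective; for instance if $a_ib_i$ happens to be central in $R$ the map vanishes identically, and more generally its image is governed by the $S$-bimodule structure of $J(R)/J(R)^2$, about which the ``no field summand'' hypothesis says nothing. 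The case $r\in J(R)$ (so $s=0$) already shows the difficulty: you must manufacture all of $r$ out of such perturbations. Iterating along the filtration $J(R)\supseteq J(R)^2\supseteq\cdots$ does not help, because the obstruction appears already at the first step. So either restrict to semisimple $R$, which is what the paper actually proves, or supply a genuinely different mechanism for absorbing the radical.
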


\begin{proof}
Choose $\alpha\in R$. Since $F$ has characteristic $0$, the Wedderburn decomposition of $R$ is the following $$R\cong\mathrm{M}_{n_1}(D_1)\times\cdots\times\mathrm{M}_{n_t}(D_t)$$ in which the numbers $n_i$'s are positive integers and the $D_i$'s are division rings that are finite-dimensional over $F$. By assumptions, if there exists $i\in\{1,\ldots,t\}$ such that $n_i=1$, then $D_i$ is not a field and so \cite[Corollary 3.8]{Pa_Gup_09} ensures that every element in $D_i$ is a generalized commutator in $D_i$. If, however, there exists an index $i\in\{1,\ldots,t\}$ such that $n_i\neq1$, then, in conjunction with Corollary~\ref{gener}, we observe that every matrix in $\mathrm{M}_{n_i}(D_i)$ is a product of at most four generalized commutators in $\mathrm{M}_{n_i}(D_i)$. Therefore, we can conclude that each element of $R$ is a product of at most four generalized commutators of $R$, as stated.
\end{proof}

Note that, under the circumstances of Theorem~\ref{main3}, for a finite-dimensional algebras whose elements are products of unipotents we refer the interested reader to \cite{BDRS}.

\medskip

The last two statements of ours manifestly demonstrate what happens in the case of an algebraically closed field.

\begin{proposition}\label{algebraically}
Let $F$ be an algebraically closed field and $n>1$ an integer. Then, every element in $\mathrm{M}_n(F)$ is a product of two images of non-commutative polynomials with zero constant terms in $F\langle\mathfrak{X}\rangle$ evaluated on $\mathrm{M}_n(F)$ in which the evaluations of such polynomials on $F$ contain at least a non-zero element.
\end{proposition}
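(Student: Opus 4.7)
The plan is to reduce to two independent ingredients: a matrix-factorization result, and a polynomial-surjectivity result. Fix once and for all two non-commutative polynomials $p,q \in F\langle\mathfrak{X}\rangle$, each with zero constant term and whose scalar evaluation is nontrivial; the goal is to realize an arbitrary $A \in \mathrm{M}_n(F)$ as $A = X\cdot Y$ with $X \in p(\mathrm{M}_n(F))$ and $Y \in q(\mathrm{M}_n(F))$.

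First, by the factorization theorem \cite[Theorem 2.1]{Pa_Bo_98} — whose proof rests only on the Jordan canonical form and therefore transfers verbatim from $\mathbb{C}$ to any algebraically closed field — I would write $A = D_1 D_2$ with both $D_1$ and $D_2$ diagonalizable in $\mathrm{M}_n(F)$.

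Second, I would show: every diagonalizable matrix lies in $p(\mathrm{M}_n(F))$ for any polynomial $p$ satisfying the hypotheses. Let $\tilde{p} \in F[x_1,\dots,x_m]$ denote the commutative polynomial obtained from $p$ by making the variables commute; by hypothesis $\tilde{p}$ is nonzero and $\tilde{p}(\mathbf{0})=0$. The key lemma is that $\tilde{p}\colon F^m\to F$ is surjective: pick any $\mathbf{a}\in F^m$ with $\tilde{p}(\mathbf{a})\neq 0$ and form $g(t)=\tilde{p}(t\mathbf{a})\in F[t]$; since $g(0)=0\neq g(1)$, $g$ is non-constant, so algebraic closedness of $F$ forces $g(F)=F$, and hence $\tilde{p}(F^m)=F$. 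Given this, for any diagonal matrix $D=\mathrm{diag}(\lambda_1,\dots,\lambda_n)$ I pick for each $j$ a tuple $\mathbf{a}^{(j)}=(a_1^{(j)},\dots,a_m^{(j)})$ with $\tilde{p}(\mathbf{a}^{(j)})=\lambda_j$ and set $S_k=\mathrm{diag}(a_k^{(1)},\dots,a_k^{(n)})$; the $S_k$ commute pairwise, so each monomial in $p$ evaluates to a diagonal matrix whose $j$-th entry depends only on the $j$-th entries of the $S_k$, giving $p(S_1,\dots,S_m)=D$. Since $p(\mathrm{M}_n(F))$ is conjugation-invariant (because $Qp(A_1,\dots,A_m)Q^{-1}=p(QA_1Q^{-1},\dots,QA_mQ^{-1})$), and every diagonalizable matrix is similar to a diagonal one, every diagonalizable matrix lies in $p(\mathrm{M}_n(F))$.

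Assembling the two ingredients: applying the first step to $A$ gives $A=D_1D_2$ with $D_1,D_2$ diagonalizable, and applying the second step to $p$ and to $q$ yields $D_1\in p(\mathrm{M}_n(F))$ and $D_2\in q(\mathrm{M}_n(F))$; hence $A\in p(\mathrm{M}_n(F))\cdot q(\mathrm{M}_n(F))$, as required. The main potential obstacle is the matrix-factorization step: one must confirm that the result ``every matrix is a product of two diagonalizables'' in \cite{Pa_Bo_98}, stated over $\mathbb{C}$, carries over to an arbitrary algebraically closed field. This is routine since the proof uses only Jordan forms, but must be checked explicitly. The surjectivity of $\tilde{p}$ and the diagonal-evaluation computation are elementary by comparison.
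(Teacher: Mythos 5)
Your proposal is correct and follows essentially the same route as the paper: factor $A$ as a product of two diagonalizable matrices via Botha's theorem, then show every diagonal (hence every diagonalizable, by conjugation-invariance of the image) matrix lies in the image of any such polynomial. The only difference is that you supply a self-contained proof of the second step (surjectivity of the commutative specialization over an algebraically closed field plus entrywise evaluation on commuting diagonal matrices), whereas the paper simply cites \cite[Lemma 3.1]{Pa_WaZhLu_21} for it; your inlined argument is the standard proof of that lemma and is sound.
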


\begin{proof}
Let $A\in\mathrm{M}_n(F)$. Since $F$ is an algebraically closed field, $F$ is infinite. According to \cite[Theorem 2.1]{Pa_Bo_98} and \cite[Theorem 2.2]{Pa_Bo_99}, $A$ is a product of two diagonalizable matrices. On the the hand, by \cite[Lemma 3.1]{Pa_WaZhLu_21}, any diagonal matrix is a image of a non-commutative polynomial with zero constant term in $F\langle\mathfrak{X}\rangle$ evaluated on $\mathrm{M}_n(F)$ in which the evaluation of such a polynomial on $F$ contains at least a non-zero element, and hence so is any diagonalizable matrix. Thus, the matrix $A$ is a product of two images of non-commutative polynomials with zero constant terms in $F\langle\mathfrak{X}\rangle$ evaluated on $\mathrm{M}_n(F)$ in which the evaluations of such polynomials on $F$ contain at least a non-zero element, as needed.
\end{proof}

What we now derive as a consequence is the following curious claim.

\begin{corollary}\label{algebraic}
Let $F$ be an algebraically closed field of characteristic $0$ and let $A$ be a locally finite $F$-algebra. Then, every element in $A$ is a product of two images of non-commutative polynomials with zero constant terms in $F\langle\mathfrak{X}\rangle$ evaluated on $A$ in which the evaluations of such polynomials on $F$ contain at least a non-zero element.
\end{corollary}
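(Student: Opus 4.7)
Given $a\in A$, local finiteness of $A$ over $F$ provides a finite-dimensional unital $F$-subalgebra $B\subseteq A$ with $a\in B$. The strategy is to reduce to the matrix case settled in Proposition~\ref{algebraically} by using the left regular representation $\rho\colon B\hookrightarrow\mathrm{M}_N(F)$ with $N=\dim_F B$; this is a unital $F$-algebra embedding, so $\rho(a)\in\mathrm{M}_N(F)$.

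By Proposition~\ref{algebraically}, there exist polynomials $p_1,p_2\in F\langle\mathfrak{X}\rangle$ with zero constant terms and non-trivial $F$-evaluations, together with matrices $X_1,\ldots,X_m,Y_1,\ldots,Y_k\in\mathrm{M}_N(F)$, such that $\rho(a)=p_1(X_1,\ldots,X_m)\,p_2(Y_1,\ldots,Y_k)$, where each factor is diagonalizable. The essential step is to lift this decomposition back into $A$, since a priori the $X_i,Y_j$ live in $\mathrm{M}_N(F)$ rather than in $\rho(B)$.

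To accomplish this, I would invoke the Wedderburn--Malcev theorem (valid because $F$ has characteristic $0$), yielding $B=S\oplus J(B)$ with $S\cong\mathrm{M}_{n_1}(F)\times\cdots\times\mathrm{M}_{n_t}(F)$ semisimple and $J(B)$ nilpotent. Applying Proposition~\ref{algebraically} block by block inside each matrix factor $\mathrm{M}_{n_i}(F)$ (with the trivial observation for $1$-dimensional factors, where $F$ is a field and any element is already its own image under $p(x)=x$), together with the multiplicative Jordan decomposition in $B$ (available because $F$ is perfect), produces elements $b_1,b_2\in B\subseteq A$ with $a=b_1b_2$ such that each $b_i$ has separable minimal polynomial over $F$. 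Finally, \cite[Lemma 3.1]{Pa_WaZhLu_21}, applied component-wise through the Wedderburn--Malcev blocks, realizes each such $b_i$ as an image on $A$ of a polynomial in $F\langle\mathfrak{X}\rangle$ with zero constant term and non-zero $F$-evaluation, giving the desired factorization of $a$.

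The chief obstacle is the mismatch between the diagonalizing conjugations in $\mathrm{M}_N(F)$, whose conjugating matrices need not lie in $\rho(B)$, and the intrinsic subalgebra structure of $B$. I would circumvent this by working directly within the Wedderburn--Malcev blocks of $B$—where Proposition~\ref{algebraically} may be applied in each simple component—and absorbing the nilpotent contribution of $J(B)$ into a unipotent factor via the multiplicative Jordan decomposition, so that the resulting factorization genuinely lives in $A$ rather than only in the ambient matrix algebra.
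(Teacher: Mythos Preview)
Your proposal diverges from the paper's argument and contains a genuine gap. The paper reduces to a finite-dimensional $F$-algebra, invokes the Wedderburn--Artin theorem to identify it with $\mathrm{M}_{n_1}(F)\times\cdots\times\mathrm{M}_{n_t}(F)$, and then applies Proposition~\ref{algebraically} together with \cite[Lemma~3.1]{Pa_WaZhLu_21} in each matrix factor; the Jacobson radical never appears, and no regular representation or Jordan decomposition is used.

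Your route attempts to handle a possibly non-trivial radical via Wedderburn--Malcev and the multiplicative Jordan decomposition, but the central assertion---that one can write $a=b_1b_2$ in $B$ with each $b_i$ having \emph{separable} minimal polynomial over $F$---fails in general. Take $B=F[\bar x]=F[x]/(x^2)$ and $a=\bar x$: an element $c+d\bar x$ with $d\neq 0$ has minimal polynomial $(t-c)^2$, so the separable elements of $B$ are exactly the scalars, and no product of two scalars equals $\bar x$. The multiplicative Jordan decomposition cannot rescue this: it applies only to units of $B$, and even then it outputs a unipotent factor whose minimal polynomial $(t-1)^k$ is not separable once $k>1$. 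Consequently ``absorbing the nilpotent contribution of $J(B)$ into a unipotent factor'' does not produce separable factors, and the subsequent appeal to \cite[Lemma~3.1]{Pa_WaZhLu_21} ``component-wise through the Wedderburn--Malcev blocks'' has nothing to act on (that lemma concerns diagonal matrices inside an honest matrix block, whereas your $b_i$ need not lie in the chosen complement $S$ at all). The opening detour through the left regular representation is abandoned precisely because of the lifting obstruction you correctly identify, so it contributes nothing; what remains is a sketch whose decisive step is false.
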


\begin{proof}
Choose $\alpha\in A$. With no loss of generality, we may assume that $A$ is a finite dimensional $F$-algebra. Since $F$ is an algebraically closed field of characteristic $0$, by Wedderburn-Artin theorem, $$R\cong\mathrm{M}_{n_1}(F)\times\cdots\times\mathrm{M}_{n_t}(F),$$ where the $n_i$'s are positive integers. Now, given $$\alpha=(\alpha_1,\cdots,\alpha_t)\in\mathrm{M}_{n_1}(F)\times\cdots\times\mathrm{M}_{n_t}(F)$$ in which $\alpha_i\in\mathrm{M}_{n_i}(F)$ for each $1\leq i\leq t$. In accordance with \cite[Lemma 3.1]{Pa_WaZhLu_21} and Proposition~\ref{algebraically}, each element $\alpha_i$ is a product of two images of non-commutative polynomials with zero constant terms in $F\langle\mathfrak{X}\rangle$ evaluated on $\mathrm{M}_n(F)$ in which the evaluations of such polynomials on $F$ contain at least a non-zero element. So, $\alpha$ is also a product of two images of non-commutative polynomials with zero constant terms in $F\langle\mathfrak{X}\rangle$ evaluated on $\mathrm{M}_n(F)$ in which the evaluations of such polynomials on $F$ contain at least a non-zero element, as required.
\end{proof}

We end up the investigation with the following challenging question of some interest and importance.

\begin{problem}\label{unsolved} Decide what are the traceless and semi-traceless matrices in the Ver\v{s}hik-Kerov group, and find a decomposition of such a group into products of images of non-commutative polynomials.
\end{problem}

\noindent{\bf Acknowledgement.} The authors are very thankful to Professor Zachary Mesyan from the University of Colorado at Colorado Springs for the valuable private communications on the present subject.

\medskip
	
\noindent{\bf Funding:} The first-named author, P.V. Danchev, of this research paper was partially supported by the Junta de Andaluc\'ia under Grant FQM 264, and by the BIDEB 2221 of T\"UB\'ITAK.

\vskip3.0pc

\end{document}